\newtheorem{theorem}{Theorem}[section]
\newtheorem{proposition}{Proposition}[section]
\newtheorem{corollary}{Corollary}[section]
\newtheorem{lemma}{Lemma}[section]
\newtheorem{example}{Example}[section]
\newtheorem{remark}{Remark}[section]
\newtheorem{definition}{Definition}[section]
\newtheorem{claim}{Claim}[section]
\newcommand{\T}{\mathbb{T}}
\newcommand{\Z}{\mathbb{Z}}
\newcommand{\N}{\mathbb{N}}
\def\T{{\mathbb T}}
\def\eps{{\varepsilon}}
\def\Z{{\mathbb Z}}
\def\N{{\mathbb N}}
\def\R{{\mathbb R}}
\def\Q{{\mathbb Q}}
\begin{document}

\title[Topological torsion elements via natural density]{Topological torsion elements via natural density and a quest for solution of Armacost like problem}
\subjclass[2010]{Primary:  11B05, Secondary: 22B05, 40A05} \keywords{ Circle group, natural density, Topological $s$-torsion element, statistical convergence, $s$-characterized subgroup, arithmetic sequence}

\author{Pratulananda Das}

\address{Department of Mathematics, Jadavpur University, Kolkata-700032, India}
\email {pratulananda@yahoo.co.in}

\author{Ayan Ghosh}

\address{Department of Mathematics, Jadavpur University, Kolkata-700032, India}
\email {ayanghosh.jumath@gmail.com}

\begin{abstract}

One can use the number theoretic idea of the notion of natural density \cite{B1} to define topological s-torsion elements (which form the statistically characterized subgroups, recently developed in \cite{DPK}) extending Armacost's idea of topological torsion elements. We follow in the line of Armacost who had posed the famous classical problem for "description of topological torsion elements" of the circle group. In this note we consider the natural density version of Armacost's problem and present a complete description of topological s-torsion elements   in terms of the support, for all arithmetic sequences which also provides the solution of Problem 6.10 posed in \cite{DPK} .
\end{abstract}
\maketitle
%
%

\section{Introduction and the background}

Throughout $\R$, $\Q$, $\Z$ and $\N$ will stand for the set of all real numbers, the set of all rational numbers,
the set of all integers and the set of all natural numbers respectively. The first three are equipped with their usual abelian group structure,
The circle group $\T$ is identified with the quotient group $\R/\Z$ of $\R$ endowed with its usual compact topology. For $x\in\R$ we denote by $\{x\}$ the distance from the integers i.e. the difference $x - [x]$.

Recall that an element $x$ of an abelian group $X$ is torsion  if there exists $k \in \mathbb{N}$ such that $kx = 0$ (more specifically called $k$-torsion in this case). An element $x$ of an abelian topological group $G$ is  \cite{B}:
\begin{itemize}
\item[(i)] {\em topologically torsion} if $n!x \rightarrow 0;$
\item[(ii)] {\em topologically $p$-torsion}, for a prime $p$, if $p^nx \rightarrow 0.$
\end{itemize}

It is obvious that any $p$-torsion element is topologically $p$-torsion. Armacost \cite{A} defined the subgroups
$$ X_p = \{x \in X: p^nx \rightarrow 0\} \ ~\mbox{and} ~ \ X! =  \{x \in X: n!x \rightarrow 0\}$$
of an abelian topological group $X$,  and started their investigation, in particular description of the elements forming the subgroups. Note that the above two notions are just special cases of the following general notion considered in (Section 4.4.2, \cite{DPS}).

\begin{definition}
Let $(a_n)$ be a sequence of integers. An element $x$ in an abelian topological group $G$ is called {\em (topological) $\underline{a}$-torsion element} if $a_nx \rightarrow 0$.
\end{definition}

Further
$$
t_{(a_n)}(\T) := \{x\in \T: a_nx \to 0\mbox{ in } \T\}.
$$
of $\T$ is called {\em a characterized} $($by $(a_n))$ {\em subgroup} of $\T$, the term {\em characterized} appearing much later, coined in \cite{BDS}.

(a) Let $p$ be a prime. For the sequence $(a_n)$, defined by $a_n = p^n$ for every $n$, obviously $t_{(p^n)}(\T)$ contains
the Pr\" ufer group $\Z(p^\infty)$. Armacost \cite{A} proved that $t_{(p^n)}(\T)$ simply coincides with $\Z(p^\infty)$ and $x$ is a topologically $p$-torsion element iff. $supp(x)$ (defined later) is finite.

(b) Armacost \cite{A} posed the problem to describe the group $\T! = t_{(n!)}(\T)$.
It was resolved independently and almost simultaneously in \cite[Chap. 4]{DPS}
and by J.-P. Borel \cite{Bo2}.

In particular, in both the above mentioned instances, the sequences of integers, concerned are arithmetic sequences. Recall that
 a sequence of positive integers $(a_n)$ is called an arithmetic sequence if
 $$
 1<a_1<a_2<a_3< \ldots <a_n< \ldots \ ~ \ \mbox{ and } a_n|a_{n+1} \ ~ \ \mbox{ for every } n\in\N  .
 $$
As has been seen, some of the most interesting cases studied are the topological $\underline{a}$-torsion elements characterized by arithmetic sequences. The results of Armacost \cite{A} and Borel \cite{Bo2} were considered in full general settings with arbitrary arithmetic sequences in \cite{DD} and then with more clarity in \cite{DI1} where topological $\underline{a}$-torsion elements were completely described for a major class of arithmetic sequences.

However in certain sense, $a_nx \to 0 $ seems somewhat too restrictive, for example observe that, only countably many elements of $\T$ are topologically $p$-torsion even if the sequence $(a_n) = (p^n)$ is not too dense (it is a geometric progression, so has exponential growth). Motivated by this observation, we intend to consider a modified definition using a more general notion of convergence, as follows:

For $m,n\in\mathbb{N}$ and $m\leq n$, let $[m, n]$ denotes the set $\{m, m+1, m+2,...,n\}$. By
 $|A|$ we denote the cardinality of a set $A$. The lower and the upper natural densities of $A \subset \mathbb{N}$ are defined by \cite{B1}
$$
\underline{d}(A)=\displaystyle{\liminf_{n\to\infty}}\frac{|A\cap [0,n-1]|}{n} ~~\mbox{and}~~
\overline{d}(A)=\displaystyle{\limsup_{n\to\infty}}\frac{|A\cap [0,n-1]|}{n}.
$$
If $\underline{d}(A)=\overline{d}(A)$, we say that the natural
density of $A$ exists and it is denoted by $d(A)$ \cite{B1} (see \cite{S1, SZ} for related works). For two subsets $A, B$ of $\N$, we will write $A \subseteq^d B$ if $d(A\setminus B) = 0$ and $A =^d B$ if $d(A\triangle B) = 0$. The idea of natural density was later used to define the notion of statistical convergence (\cite{F, St}, see also \cite{C, S,Fr}).

\begin{definition}\label{Def1}
A sequence of real numbers $(x_n)$ is said to converge to a real number $x_0$ statistically if for any $\eps > 0$, $d(\{n \in \mathbb{N}: |x_n - x_0| \geq \eps\}) = 0$.
\end{definition}

Over the years, the notion of statistical convergence has been extended to general topological spaces using open neighborhoods \cite{MK} and a lot of work has been done on the notion of statistical convergence primarily because it extends the notion of usual convergence very naturally preserving many of the basic properties but at the same time including more sequences under its purview paving way for interesting applications (for example see \cite{BDK, C, C2}). The following two characterizations of statistical convergence would be of much help in our investigations.
\begin{theorem}
\cite{S} For a sequence of real numbers $(x_n)$, $x_n \to x_0$ statistically if and only if there exists a subset A of $ \N$ of asymptotic density 1, such that $\displaystyle{\lim_{n \in A}} x_n = x_0$.
\end{theorem}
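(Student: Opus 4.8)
The statement is the standard equivalence between statistical convergence and ordinary convergence along a density-one subsequence, and the plan is to prove the two implications separately; the reverse implication is immediate, while the forward one rests on a single ``diagonal'' construction whose only subtle point is a density estimate.

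For the reverse implication, suppose $A\subseteq\N$ with $d(A)=1$ and $\lim_{n\in A}x_n=x_0$. Given $\eps>0$ there is $N$ with $|x_n-x_0|<\eps$ for all $n\in A$ with $n\ge N$, so $\{n:\,|x_n-x_0|\ge\eps\}\subseteq(\N\setminus A)\cup[0,N-1]$; since upper density is subadditive and $\overline{d}(\N\setminus A)=\overline{d}([0,N-1])=0$, we get $d(\{n:\,|x_n-x_0|\ge\eps\})=0$, i.e. $x_n\to x_0$ statistically.

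For the forward implication, assume $x_n\to x_0$ statistically. For each $j\in\N$ set $A_j=\{n\in\N:\,|x_n-x_0|<1/j\}$; applying Definition~\ref{Def1} with $\eps=1/j$ gives $d(A_j)=1$, and plainly $A_1\supseteq A_2\supseteq\cdots$. Using $\overline{d}(\N\setminus A_j)=0$ we may fix a strictly increasing sequence $m_1<m_2<\cdots$ so that $|(\N\setminus A_j)\cap[0,n-1]|<n/j$ whenever $n\ge m_j$, and then set
\[
A=\bigcup_{j=1}^{\infty}\bigl(A_j\cap[m_j,\,m_{j+1})\bigr),\qquad [m_j,m_{j+1})=\{m_j,\dots,m_{j+1}-1\}.
\]
Convergence along $A$ is then immediate: if $n\in A$ and $n\ge m_k$, then $n\in[m_j,m_{j+1})$ for some $j\ge k$, hence $n\in A_j$ and $|x_n-x_0|<1/j\le 1/k$, so $\lim_{n\in A}x_n=x_0$.

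The one genuinely delicate point is showing $d(A)=1$: a naive block-by-block estimate of $|(\N\setminus A)\cap[0,n-1]|$ accumulates the errors coming from all the earlier blocks and is not obviously $o(n)$. The way around this --- and the crux of the argument --- is to exploit the nestedness $A_1\supseteq A_2\supseteq\cdots$: for $n\in[m_k,m_{k+1})$ every set $(\N\setminus A_i)\cap[m_i,m_{i+1})$ with $i\le k$ is contained in $\N\setminus A_k$, so all the ``bad'' blocks before $m_k$ collapse, yielding $(\N\setminus A)\cap[0,n-1]\subseteq[0,m_1-1]\cup\bigl((\N\setminus A_k)\cap[0,n-1]\bigr)$ and hence $|(\N\setminus A)\cap[0,n-1]|<m_1+n/k$. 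For any fixed $j$ and all $n\ge m_j$ (so $k\ge j$) this gives $|(\N\setminus A)\cap[0,n-1]|/n<m_1/n+1/j$, whence $\overline{d}(\N\setminus A)\le 1/j$; letting $j\to\infty$ gives $d(A)=1$, completing the argument. Everything else --- the existence of the sequence $(m_j)$ and subadditivity of upper density --- is routine.
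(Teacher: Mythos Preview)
Your proof is correct and is essentially the classical diagonal argument due to \v Sal\'at. Note, however, that the paper does not supply its own proof of this theorem: it is merely quoted from \cite{S} as background, so there is no in-paper argument to compare against.
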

\begin{lemma}\label{suf1}
(Folklore) A sequence $(x_n)$ converges to $\xi\in\R$ {\em statistically} if and only if for any $A\subseteq\N$ with $\overline{d}(A)>0$, there exists an infinite $A'\subseteq A $ such that $\lim\limits_{n\in A'} x_n = \xi $.
\end{lemma}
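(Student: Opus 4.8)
The plan is to establish both directions of the equivalence, leaning on the characterization of statistical convergence quoted just above (statistical convergence to $\xi$ is the same as ordinary convergence to $\xi$ along some set of asymptotic density $1$) for the forward implication, and on a short contrapositive argument for the converse.

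For \emph{necessity}, I would fix a statistically convergent sequence $(x_n)$ with statistical limit $\xi$ and an arbitrary $A\subseteq\N$ with $\overline{d}(A)>0$. By the quoted theorem there is a set $B\subseteq\N$ with $d(B)=1$ and $\lim_{n\in B}x_n=\xi$. The point is that the set $A':=A\cap B$ already does the job: since $A\setminus B\subseteq\N\setminus B$ and $\overline{d}(\N\setminus B)=0$, finite subadditivity of $\overline{d}$ gives $\overline{d}(A')\geq\overline{d}(A)-\overline{d}(A\setminus B)=\overline{d}(A)>0$, so $A'$ is infinite; and since $A'\subseteq B$, the subsequence $(x_n)_{n\in A'}$ converges to $\xi$ in the ordinary sense. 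Thus $A'\subseteq A$ is an infinite set with $\lim_{n\in A'}x_n=\xi$, as required.

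For \emph{sufficiency}, I would argue by \emph{contraposition}. If $(x_n)$ does not converge to $\xi$ statistically, then by Definition \ref{Def1} there is an $\eps>0$ for which $A:=\{n\in\N:|x_n-\xi|\geq\eps\}$ does not have density $0$, that is, $\overline{d}(A)>0$. Applying the hypothesis to this particular $A$ produces an infinite $A'\subseteq A$ with $\lim_{n\in A'}x_n=\xi$. But every $n\in A'$ lies in $A$ and hence satisfies $|x_n-\xi|\geq\eps$, which is incompatible with convergence of $(x_n)_{n\in A'}$ to $\xi$. This contradiction shows the stated condition forces statistical convergence, completing the proof.

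There is no genuine obstacle here — this is a folklore fact, and the argument above is essentially the shortest one. The only places calling for a little care are the density bookkeeping in the first implication (that intersecting with a set of density $1$ cannot kill positive upper density, and in particular keeps the set infinite) and, in the second implication, being sure to apply the quantified hypothesis to the ``bad'' set of $\eps$-far indices. If one prefers not to invoke the quoted theorem, necessity can be proved directly: for each $k$ the set $S_k=\{n:|x_n-\xi|<1/k\}$ has density $1$, so $A\cap S_k$ is infinite for every $k$, and a diagonal choice $n_1<n_2<\cdots$ with $n_k\in A\cap S_k$ yields the desired $A'=\{n_k:k\in\N\}$; but routing through the quoted characterization is cleaner.
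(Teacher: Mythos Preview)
Your proof is correct; both directions are handled cleanly, with the density bookkeeping in the necessity part (that intersecting a set of positive upper density with a density-$1$ set keeps it infinite) and the contrapositive for sufficiency being exactly the right moves. Note that the paper itself does not supply a proof of this lemma, labeling it as folklore, so there is nothing to compare against; your argument is the standard one and would fit well if a proof were to be included.
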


 Under the circumstances, it seems very natural to use the notion of statistical convergence to relax the condition $a_nx \to 0 $, i.e. we will consider the situation when $a_nx \to 0$ statistically (rephrasing Definition 1.2, in our context, this means that for every $\eps > 0$ there exists a subset A of $ \N$ of asymptotic density 0, such that $\{a_nx\} < \eps$ for every $n \not \in A$).

 \begin{definition}
Let $(a_n)$ be a sequence of integers. An element $x$ in an abelian topological group $G$ is called {\em topological $s_{\underline{a}}$-torsion element (only topological $s$-torsion when the given sequence $(a_n)$ is fixed)} if $a_nx \to 0\  \mbox{ statistically in }$ $G$.
\end{definition}
It has already been observed in the recent article \cite{DPK} that the topological $s$-torsion elements of $\T$ form a proper Borel subgroup of $\T$ of cardinality $\mathfrak c$ for any arithmetic sequence $(a_n)$ which was named as an {\em $s$-characterized (by $(a_n)$) subgroup of $\T$} denoted by $t_{(a_n)}^s (\T)$ i.e. $t^s_{(a_n)}(\T) := \{x\in \T: a_nx \to 0\  \mbox{ statistically in }\  \T\}$.

In this note we intend to present a complete description of topological $s$-torsion elements of $\T$ (given in Theorem 2.1 and Theorem 2.2) which in turn would answer the following open problem posed in \cite{DPK}:\\
\textbf{Problem 6.10.} Let $(a_n)$ be an arithmetic sequence such that $q_n>2$ for infinitely  many $n$. Does there
exist a characterization of the elements of the subgroup $t^s_{(a_n)}(\T)$ only in terms of the support.

Before proceeding to our main result we present below certain basic definitions, notations and results which will be needed in the next section.

\begin{definition}
 An arithmetic sequence of integers $(a_n)$ is called
 \begin{itemize}
 \item[(i)] $q$-bounded if the sequence of ratios $(q_n)$ (where $q_n=\frac{a_n}{a_{n-1}}$) is bounded.
 \item[(ii)] $q$-divergent if the sequence of ratios $(q_n)$  diverges to $\infty$.
 \end{itemize}
\end{definition}

\begin{lemma}\cite{DI1}
For any arithmetic sequence $(a_n)$ and $x\in\T$, we can build a unique sequence of integers $(c_n)$, where $0\leq c_n<q_n$, such that
\begin{equation}\label{canrep}
   x=\sum\limits_{n=1}^{\infty}\frac{c_n}{a_n}
 \end{equation}
    and $c_n<q_n-1$ for infinitely many $n$.
\end{lemma}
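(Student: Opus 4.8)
The plan is to obtain the representation \eqref{canrep} as a mixed-radix (Cantor-type) series produced by a greedy algorithm, the single recurring tool being the telescoping identity
\[
\sum_{n>N}\frac{q_n-1}{a_n}=\sum_{n>N}\Bigl(\frac1{a_{n-1}}-\frac1{a_n}\Bigr)=\frac1{a_N}
\]
(here we set $a_0:=1$, so that $a_n=q_1q_2\cdots q_n$ and each $q_n=a_n/a_{n-1}$ is an integer $\ge 2$). This identity records that the largest possible tail $\sum_{n>N}\frac{c_n}{a_n}$ of such a series, subject to $0\le c_n<q_n$, equals $1/a_N$, and is attained precisely when $c_n=q_n-1$ for every $n>N$.

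For existence, I would fix the representative $x\in[0,1)$ and define the digits $c_n$ together with the remainders $r_N:=x-\sum_{n=1}^N\frac{c_n}{a_n}$ recursively: start from $r_0=x\in[0,1)$ and, given $r_{N-1}\in[0,1/a_{N-1})$, set $c_N:=\lfloor a_N r_{N-1}\rfloor$. Since $a_Nr_{N-1}<a_N/a_{N-1}=q_N$, this gives $0\le c_N<q_N$, and $r_N=r_{N-1}-c_N/a_N\in[0,1/a_N)$; as $a_N\to\infty$ one gets $r_N\to 0$, hence $x=\sum_{n\ge1}c_n/a_n$. The clause ``$c_n<q_n-1$ for infinitely many $n$'' then comes essentially for free: if $c_n=q_n-1$ held for all $n\ge M$, the telescoping identity would force $r_{M-1}=1/a_{M-1}$, contradicting $r_{M-1}<1/a_{M-1}$.

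For uniqueness, suppose $(c_n)$ and $(c_n')$ both satisfy the conclusion, agree on all indices $<N$, and differ at $N$, say $c_N<c_N'$. Then
\[
\frac{c_N'-c_N}{a_N}=\sum_{n>N}\frac{c_n-c_n'}{a_n}\le\sum_{n>N}\frac{c_n}{a_n},
\]
while choosing some $m>N$ with $c_m<q_m-1$ and comparing with the telescoping bound gives $\sum_{n>N}\frac{c_n}{a_n}<1/a_N$; since $\frac{c_N'-c_N}{a_N}\ge 1/a_N$, this is a contradiction, so the two sequences coincide. (The same estimate shows $\sum_{n\ge1}c_n/a_n<1$, so the series genuinely converges inside $[0,1)$ and represents the given $x\in\T$.)

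The only genuinely delicate point --- and the one the ``infinitely often strictly below $q_n-1$'' hypothesis is designed to handle --- is the accounting of strict versus non-strict inequalities, which is exactly the familiar ambiguity $0.0999\ldots=0.1$ of ordinary positional notation. Once the telescoping identity is isolated, both halves reduce to careful but routine comparisons, and I do not anticipate any obstacle beyond that book-keeping.
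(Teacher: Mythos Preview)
Your argument is correct and is precisely the standard Cantor--series construction: the greedy algorithm produces the digits, the telescoping identity $\sum_{n>N}(q_n-1)/a_n=1/a_N$ bounds the tails, and the ``infinitely often $c_n<q_n-1$'' clause is what rules out the terminating-versus-repeating ambiguity, both in existence and in uniqueness.

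There is, however, nothing to compare against: the paper does not prove this lemma. It is quoted from \cite{DI1} and stated without proof, serving only to fix the canonical representation used throughout Section~2. Your write-up would fit as a self-contained replacement for that citation; the only cosmetic point is that the paper's conventions already implicitly take $a_0=1$ (since $q_1=a_1$), so your explicit stipulation $a_0:=1$ is consistent with its usage.
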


For $x\in\T$ with canonical representation (\ref{canrep}), we define $supp(x)=\{n\in\N\ : \ c_n\neq0\}$ and $supp_q(x)=\{n\in\N\ : \ c_n=q_n-1\}$. Clearly $supp_q(x) \subseteq supp(x)$.

Now we are going to introduce some equations which will be used repeatedly in our next section. Let $(a_n)$ be an arithmetic sequence and $x\in\T$ has canonical representation (\ref{canrep}). Then we have, for all non-negative integer $k$,
\begin{equation}\label{eq1}
i) \ ~ \ q_n\cdot q_{n+1}\ldots q_{n+k} \ = \ \frac{a_n}{a_{n-1}} \cdot \frac{a_{n+1}}{a_n} \ldots \frac{a_{n+k}}{a_{n+k-1}} \ = \ \frac{a_{n+k}}{a_{n-1}}.
\end{equation}
$$
ii) \ ~ \ \{a_{n-1}x\}\ = \ \sum\limits_{i=n}^{\infty} \frac{c_i}{a_i} \cdot a_{n-1}  \ = \ (c_n\cdot\frac{a_{n-1}}{a_n} + c_{n+1}\cdot\frac{a_{n-1}}{a_{n+1}}+ \ldots)$$
\begin{equation}\label{eq2}
 = \ \frac{c_n}{q_n}+\frac{c_{n+1}}{q_n q_{n+1}} + \ldots + \frac{c_{n+k}}{q_n \cdot q_{n+1} \ldots q_{n+k}} + \sum\limits_{i=k+1}^{\infty} \frac{c_{n+i}}{q_n \cdot q_{n+1} \ldots q_{n+i}}.
\end{equation}

$$
iii) \ ~ \ \{a_{n+k}x\} \ = \ a_{n+k}(\frac{c_{n+k+1}}{a_{n+k+1}}+\frac{c_{n+k+2}}{a_{n+k+2}}+ \ldots) \ = \ (\frac{c_{n+k+1}}{q_{n+k+1}}+\frac{c_{n+k+2}}{q_{n+k+1}\cdot q_{n+k+2}}+\ldots)
$$
\begin{equation}\label{eq3}
= \ q_n \cdot q_{n+1} \ldots q_{n+k} \sum\limits_{i=k+1}^{\infty} \frac{c_{n+i}}{q_n \cdot q_{n+1} \ldots q_{n+i}}.
\end{equation}
From equations (\ref{eq3}) and equations (\ref{eq2}), we get
\begin{equation}\label{eq4}
iv) \ ~ \ \{a_{n-1}x\} \ = \ \frac{c_n}{q_n} + \frac{c_{n+1}}{q_n\cdot q_{n+1}} + \ldots + \frac{c_{n+k}}{q_n\cdot q_{n+1} \ldots q_{n+k}}+ \frac{\{a_{n+k}x\}}{q_n\cdot q_{n+1} \ldots q_{n+k}}.
\end{equation}
For all $n\in\N$ and $k\in\N\cup \{0\}$, we define
\begin{equation}\label{eq0}
\sigma_{n,k}\ =\ \frac{c_n}{q_n} + \frac{c_{n+1}}{q_n\cdot q_{n+1}} + \ldots + \frac{c_{n+k}}{q_n\cdot q_{n+1} \ldots q_{n+k}}.
\end{equation}
Therefore, equation (\ref{eq4}) becomes,
\begin{equation}\label{eq5}
v) \ ~ \ \{a_{n-1}x\}= \sigma_{n,k}+\frac{\{a_{n+k}x\}}{q_n\cdot q_{n+1} \ldots q_{n+k}}.
\end{equation}
Further putting $k=0$ in equation (\ref{eq4}), we finally obtain
\begin{equation}\label{eq6}
vi) \ ~ \ \{a_{n-1}x\}=\frac{c_n}{q_n}+\frac{\{a_nx\}}{q_n}.\\
\end{equation}

Let $a = (a_n)$ be a given arithmetic sequence. Now for any $B\subseteq\N$ with $d(B)>0$, let $t_{(a_B)} (\T) = \{ x\in\T : \ \lim\limits_{n\in B} a_nx=0$ in $\T \}$ and $t_{(a_B)}^s (\T) = \{ x\in\T : \ \lim\limits_{n\in B'} a_nx=0$ in $\T\ $ for some $B'\subseteq B$ with $d(B\setminus B')=0\}$. Therefore, for all $B \subseteq \N$ with $\overline{d}(B)>0$, we have  $t_{(a_n)}^s (\T) \subseteq t_{(a_B)}^s (\T)$ and $t_{(a_n)}^s (\T) = \bigcap\limits_{B\in [\N]^{\aleph_0} \ \& \ \overline{d}(B)>0} t_{(a_B)}^s (\T)$.
\begin{lemma}\label{nec}
If $B\subseteq\N$ with $\overline{d}(B)>0$ and $x\in t_{(a_{B-1})}(\T)$ (where $B-1 = \{k-1: k \in B\}$) then the following hold: \\
  i) If $B\subseteq^d supp(x)$ and $q$-bounded, then $B\subseteq^d supp_q(x)$ and there exists $B'\subseteq B$ with $d(B\setminus B')=0$ such that $\lim\limits_{n\in B'} \{a_{n-1}x\}=1$ in $\R$.\\
  ii) If $d(B\cap supp(x))=0$, then there exists $B'\subseteq B$ with $d(B\setminus B')=0$ such that $\lim\limits_{n\in B'} \{a_{n-1}x\}=0$ in $\R$.
\end{lemma}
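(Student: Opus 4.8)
The plan is to exploit the telescoping identity \eqref{eq5}, namely $\{a_{n-1}x\}=\sigma_{n,k}+\dfrac{\{a_{n+k}x\}}{q_n q_{n+1}\cdots q_{n+k}}$, together with the hypothesis $x\in t_{(a_{B-1})}(\T)$, which says exactly that $\{a_{n-1}x\}\to 0$ along $n\in B$. First I would pass to a subset $B'\subseteq B$ with $d(B\setminus B')=0$ along which $\{a_{n-1}x\}\to 0$ genuinely (this is possible since $\overline d(B)>0$ and the statistical-type limit holds along $B$; more precisely, $x\in t_{(a_{B-1})}(\T)$ gives honest convergence along all of $B$, so one may even take $B'=B$, but keeping the flexibility to thin out is harmless). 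The two cases are then handled by reading \eqref{eq6}, $\{a_{n-1}x\}=\dfrac{c_n}{q_n}+\dfrac{\{a_nx\}}{q_n}$, in the two regimes.

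For part (ii), suppose $d(B\cap \supp(x))=0$. Thin $B$ to $B'\subseteq B$ with $d(B\setminus B')=0$ so that simultaneously $\{a_{n-1}x\}\to 0$ and $n\notin\supp(x)$ for $n\in B'$; the latter forces $c_n=0$, so by \eqref{eq6} we get $\{a_{n-1}x\}=\dfrac{\{a_nx\}}{q_n}$ for $n\in B'$. I would then iterate: for a fixed $n\in B'$, apply \eqref{eq6} repeatedly to express $\{a_{n-1}x\}$ in terms of $c_n,c_{n+1},\dots$ — but since we want a statement about $\{a_{n-1}x\}$ itself, the cleaner route is: because $0\le c_n<q_n$ always, \eqref{eq6} already gives $\{a_{n-1}x\}\ge \dfrac{c_n}{q_n}$, but we need the reverse, i.e. smallness. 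Actually the direct argument is: $\{a_{n-1}x\}\to 0$ along $B'$ is the hypothesis, so there is literally nothing to prove for (ii) except the choice of $B'$ — wait, the hypothesis $x\in t_{(a_{B-1})}(\T)$ is ordinary convergence along $B$, so (ii) is immediate with $B'=B$. The content of (ii) must therefore be producing this from a weaker hypothesis; re-reading, $t_{(a_{B-1})}(\T)$ is indeed ordinary convergence, so (ii) should just record $B'=B$ after noting the $c_n=0$ consequence is not even needed. I would present (ii) as essentially immediate, using the $c_n=0$ observation only to streamline later citations.

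The substance is part (i). Assume $B\subseteq^d\supp(x)$ and $q$-bounded, say $q_n\le M$ for $n\in B$ (after thinning). Thin $B$ to $B'$ so that $\{a_{n-1}x\}\to 0$ and $c_n\ne 0$ for all $n\in B'$. From \eqref{eq6}, $\{a_{n-1}x\}=\dfrac{c_n}{q_n}+\dfrac{\{a_nx\}}{q_n}\ge \dfrac{1}{q_n}\ge \dfrac{1}{M}$ unless the two fractional parts nearly cancel, which cannot happen since both $\dfrac{c_n}{q_n}\ge \dfrac1{q_n}$ and $\dfrac{\{a_nx\}}{q_n}\ge 0$ are nonnegative and their sum is a genuine value in $[0,1)$ — so actually $\{a_{n-1}x\}\ge \dfrac{1}{M}$ for all $n\in B'$, contradicting $\{a_{n-1}x\}\to 0$ along $B'$ unless $B'$ is finite. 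Hence $d(B\cap\supp(x))$ being positive while the convergence holds forces $c_n=q_n-1$ for all but density-zero many $n\in B$: indeed for such $n$, \eqref{eq6} gives $\{a_{n-1}x\}=\dfrac{q_n-1}{q_n}+\dfrac{\{a_nx\}}{q_n}=1-\dfrac{1-\{a_nx\}}{q_n}$, so $\{a_{n-1}x\}$ is close to $1$ precisely when $q_n$ is large or $\{a_nx\}$ is close to $1$; with $q$-boundedness one must then drive $\{a_nx\}\to 1$ along a suitable subset. The careful step, and the main obstacle, is the bookkeeping that propagates "$c_n\ne q_n-1$ on a positive-density set leads to $\{a_{n-1}x\}$ bounded away from $0$ and $1$" — this needs the observation that if $c_n<q_n-1$ then $\dfrac{c_n}{q_n}\le \dfrac{q_n-2}{q_n}=1-\dfrac2{q_n}$, hence $\{a_{n-1}x\}\le 1-\dfrac2{q_n}+\dfrac1{q_n}=1-\dfrac1{q_n}\le 1-\dfrac1M$, while $c_n\ge 1$ gives $\{a_{n-1}x\}\ge \dfrac1{q_n}\ge\dfrac1M$; combined with $\{a_{n-1}x\}\to 0$ along $B'$ this is the desired contradiction, yielding $B\subseteq^d\supp_q(x)$. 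Finally, having $c_n=q_n-1$ and $q_n\le M$ on a density-full subset of $B$, I would thin once more and use \eqref{eq6} in the form $1-\{a_{n-1}x\}=\dfrac{1-\{a_nx\}}{q_n}$ together with $\{a_{n-1}x\}\to 0$ to conclude — here one needs to be slightly delicate because $\{a_nx\}$ for $n\in B'$ is not directly controlled; the fix is to note $1-\{a_{n-1}x\}\le \dfrac1{q_n}\cdot 1$ is automatic, so instead one argues that $\{a_{n-1}x\}\to 0$ is actually incompatible with $c_n=q_n-1$ and $q_n$ bounded, UNLESS we reinterpret: the correct target is $\lim_{n\in B'}\{a_{n-1}x\}=1$, not $0$. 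Thus the hypothesis must be that $x\in t_{(a_{B-1})}(\T)$ means $a_{n-1}x\to 0$ in $\T$, i.e. $\{a_{n-1}x\}\to 0$ OR $\{a_{n-1}x\}\to 1$ (distance to the integers!) — recalling that $\{x\}$ here denotes distance from the integers, as stated in the introduction. With that reading, $\|a_{n-1}x\|\to 0$ and $\dfrac{c_n}{q_n}$ near $0$ or near $1$; $c_n\ne 0$ kills "near $0$", forcing $c_n=q_n-1$ and $\{a_{n-1}x\}\to 1$, which is exactly the assertion. The genuine obstacle is marshalling these inequalities uniformly via the $q$-bound $M$ and handling the density-zero exceptional sets at each thinning so they accumulate to a single density-zero set; I would do this by a standard diagonal/countable-intersection argument over finitely many $\eps=1/k$.
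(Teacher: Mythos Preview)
Your proposal eventually lands on the right mechanism for part (i), but it is built on a misreading of the notation that creates a genuine gap in part (ii).

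Despite the introduction's unfortunate phrase ``distance from the integers,'' the paper uses $\{x\}=x-[x]$, the fractional part in $[0,1)$; this is forced by identities such as \eqref{eq6}, and by the very conclusion $\lim_{n\in B'}\{a_{n-1}x\}=1$ in part (i), which would be impossible for a distance in $[0,\tfrac12]$. Consequently, $x\in t_{(a_{B-1})}(\T)$ means $\{a_{n-1}x\}\to 0$ \emph{or} $\{a_{n-1}x\}\to 1$ along $B$, and the lemma's job is to decide which.

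This makes your treatment of (ii) incorrect. You write that the $c_n=0$ observation is ``not even needed'' and that (ii) is immediate with $B'=B$. It is not: the hypothesis gives only $\T$-convergence, and the whole point of thinning to $B'=B\cap(\N\setminus\supp(x))$ is that $c_n=0$ forces $\{a_{n-1}x\}=\{a_nx\}/q_n<\tfrac12$, which is precisely what rules out the alternative $\{a_{n-1}x\}\to 1$. This is exactly the paper's one-line argument for (ii), and it cannot be omitted.

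For (i) your final version is essentially the paper's proof, though the paper's order is cleaner: first use $c_n\ge 1$ and the $q$-bound $q$ to get $\{a_{n-1}x\}\ge c_n/q_n>1/q$, so $\T$-convergence forces $\{a_{n-1}x\}\to 1$ in $\R$ along $B'=B\cap\supp(x)$; \emph{then} for large $n\in B'$ one has $1-1/q_n\le 1-1/q<\{a_{n-1}x\}<(c_n+1)/q_n$, whence $c_n=q_n-1$. Your two-sided bound $1/M\le \{a_{n-1}x\}\le 1-1/M$ when $1\le c_n\le q_n-2$ is an equivalent route to the same conclusion. The ``diagonal/countable-intersection'' apparatus you mention at the end is unnecessary: only one thinning (to $B\cap\supp(x)$, resp.\ $B\setminus\supp(x)$) is required, and it already has density-zero complement in $B$.
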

\begin{proof}
i) Let $q=1+\max\limits_{n\in B} \{q_n\}$ and $B'=B\cap supp(x)$. Since $B'\subseteq B$ and $d(B\setminus supp(x))=0$, we get $d(B\setminus B')=d(B\setminus (B\cap supp(x)))=d(B\setminus supp(x))=0$.\\
Therefore,
$$
\{a_{n-1}x\}\geq \frac{c_n}{q_n} > \frac{1}{q} \ ~ \ \forall \ n\in B' \mbox{ (Since $c_n\geq 1$ for all $n\in B'$) .}
$$
But as $x\in t_{(a_{B-1})}(\T)$, thus we can conclude that $ \lim\limits_{n\in B'} \{a_{n-1}x\}=1$ in $\R$. \\
Therefore,
\begin{eqnarray*}
1-\frac{1}{q_n} &<& 1- \frac{1}{q} <\{a_{n-1}x\} = \frac{c_n}{q_n} + \frac{\{a_nx\}}{q_n}\\
 &<& \frac{c_n+1}{q_n} \ ~ \ \mbox{for almost all} \ n\in B' \mbox{ (From equation (\ref{eq6})).}
\end{eqnarray*}
$$
\Rightarrow q_n -1<c_n+1 \mbox{ i.e. } c_n>q_n-2 \ \mbox{for almost all }\ n\in B'.
$$
Hence, $c_n=q_n-1$ for almost all $n\in B'$ i.e. $B'\subseteq^* supp_q(x)$, which implies $B\subseteq^d supp_q(x)$.\\

ii) Let $B'=B\cap(\N\setminus supp(x))$. Since $B'\subseteq B$ and $d(B\cap supp(x))=0$, we get $d(B\setminus B')=d(B\setminus (B\cap (\N\setminus supp(x))))=d(B\setminus (B\setminus supp(x)))=d(B\cap supp(x))=0$.
Now, from equation (\ref{eq6}), we have
$$
\{a_{n-1}x\}=0+\frac{\{a_nx\}}{q_n} <\frac{1}{2} \ \forall\ n\in B' \mbox{ (Since $c_n=0 \ \forall \ n\in B'$). }
$$
Then in view of the fact that $x\in t_{(a_{B-1})}(\T)$, we must have, $\lim\limits_{n\in B'} \{a_{n-1}x\}=0$ in $\R$.
\end{proof}

\begin{lemma}\label{suf2}
Let $(a_n)$ be an arithmetic sequence. Consider $A\subseteq\N$ with $\overline{d}(A)>0$ where $A$ is not $q$-bounded. If $\nexists$ any $q$-bounded subset $A'\subseteq A$ with $\overline{d}(A')>0$  then $\exists$ a $q$-divergent set $B\subseteq A$ with $d(A\setminus B)=0$.
\end{lemma}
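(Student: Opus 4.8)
The plan is to write $A$ as an increasing union of $q$-bounded ``layers'' and then diagonalise across them. Since $(a_n)$ is arithmetic, every ratio $q_n=a_n/a_{n-1}$ is an integer $\geq 2$, so the sets $A_k:=\{n\in A: q_n\leq k\}$ form an increasing chain $A_1\subseteq A_2\subseteq\cdots$ with $\bigcup_k A_k=A$. Each $A_k$ is a $q$-bounded subset of $A$ (its ratios are bounded by $k$), so by hypothesis it cannot have positive upper density; hence $\overline{d}(A_k)=0$, and a fortiori $d(A_k)=0$, for every $k$. (This also shows the assumption that $A$ itself is not $q$-bounded is automatic here and is not needed in the proof, since otherwise $A$ would be a $q$-bounded subset of itself of positive upper density.)

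Next I would fix a fast-growing sequence of cut-offs: as $\overline{d}(A_k)=0$, for each $k$ there is $N_k$ with $|A_k\cap[0,n-1]|<n/k$ for all $n\geq N_k$, and after enlarging we may assume $1<N_1<N_2<\cdots$, so that $N_k\to\infty$. Then I would set
$$D:=\bigcup_{k=1}^{\infty}\bigl(A_k\cap[N_k,N_{k+1})\bigr)\subseteq A,\qquad B:=A\setminus D,$$
so that $A\setminus B=D$, and it remains to verify the two conclusions.

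For $d(A\setminus B)=0$: if $N_k\leq n<N_{k+1}$, then for $j>k$ the block $A_j\cap[N_j,N_{j+1})$ is disjoint from $[0,n)$, while for $j\leq k$ it lies in $A_j\subseteq A_k$; hence $D\cap[0,n)\subseteq A_k\cap[0,n)$, so $|D\cap[0,n-1]|/n<1/k$. Since $k\to\infty$ as $n\to\infty$ (because $N_k\to\infty$), we get $\overline{d}(D)=0$, i.e.\ $d(A\setminus B)=0$; in particular $\overline{d}(B)=\overline{d}(A)>0$, so $B$ is infinite. For $B$ being $q$-divergent, fix $M$ and suppose $n\in B$ with $q_n\leq M$; then $n\in A_M\subseteq A_j$ for every $j\geq M$, so if $n$ lay in a block of index $j\geq M$ we would have $n\in A_j\cap[N_j,N_{j+1})\subseteq D$, contradicting $n\notin D$; hence $n<N_M$. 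Thus $\{n\in B:q_n\leq M\}\subseteq[0,N_M)$ is finite for every $M$, which is precisely $\lim_{n\in B}q_n=\infty$.

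The only point requiring care is the density estimate $\overline{d}(D)=0$: it hinges on the nesting $A_j\subseteq A_k$ for $j\leq k$, which is what lets the entire ``tail'' $D\cap[0,n)$ be absorbed into the single set $A_k$ whose density at scale $n$ was already controlled by the choice of $N_k$; without this monotone structure $D$ could easily carry positive density. Everything else is routine bookkeeping.
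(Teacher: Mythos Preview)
Your proof is correct. The paper's argument is organised slightly differently: it slices $A$ into the \emph{individual} level sets $A_m=\{n\in A:q_n=m\}$, notes each has density zero, and then invokes the fact that the ideal $\mathcal{I}_d$ of density-zero sets is a $P$-ideal (citing \cite{S,C}) to obtain a single density-zero set $C$ with $A_m\setminus C$ finite for every $m$; the complement $B=A\setminus C$ is then $q$-divergent. Your version instead uses the \emph{cumulative} sub-level sets $A_k=\{n\in A:q_n\leq k\}$, which are nested by construction, and carries out the diagonalisation by hand via the cut-offs $N_k$. The underlying idea is the same---your explicit construction of $D$ is exactly the standard proof that $\mathcal{I}_d$ is a $P$-ideal, applied to the chain $(A_k)$---but your write-up is self-contained and avoids the external citation, while the paper's is shorter once the $P$-ideal fact is granted. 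Your observation that the hypothesis ``$A$ is not $q$-bounded'' is redundant is also correct and worth noting.
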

\begin{proof}
Let $A_m=\{n\in A\ :  \ q_n=m\}$ for some $m\in \N\setminus \{1\}$. Since $\nexists$ any $q$-bounded set $A'\subseteq A$ with $\overline{d}(A')>0$, we conclude that $d(A_m)=0$ for all $m\in\N\setminus \{1\}$. If there exists $k\in\N$ such that $A_m$ is finite for all $m>k$, then setting $B=\bigcup\limits_{m=k+1}^{\infty} A_m$, it is easy to observe that $B$ is $q$-divergent with $d(A\setminus B)=0$ (since $d(\bigcup\limits_{m=2}^k A_m)=0$).

Otherwise without any loss of generality, we can assume that $A_m$ is infinite for all $m\in\N\setminus \{1\}$. Now, considering the sequence $(A_m)_{m\in\N}$ of density zero sets, one can find $C \subset \N$ with $d(C)=0$ such that $A_m\setminus C$ is finite for all $m\in\N\setminus\{1\}$ (for explicit construction of such a set, see \cite{S}, also the existence follows from the fact that $\mathcal{I}_d =\{A \subset \N: d(A) = 0\}$ is a $P$-ideal, see \cite{C}).

 Let, $B=A\setminus C$. Since $d(C)=0$, we conclude that $d(A\setminus B)=0$. Consider any $l\in\N\setminus \{1\}$. Let, $n_l= \ max\ \{n:\ n\in A_m\setminus C$ and $m\leq l\}$. Now, for all $n\in B$ with $n>n_l$, we have $q_n>l$. Since $l$ was taken arbitrarily, it follows that $B$ is $q$-divergent.
\end{proof}

\section{Main results}

\begin{theorem}\label{mainth}
Let, $(a_n)$ be an arithmetic sequence and $x\in\T$. Then $x$ is a topological $s$-torsion element (i.e. $x\in t_{(a_n)}^s (\T)$) if and only if either $d(supp(x))=0$ or if $\ \overline{d}(supp(x))>0$, then for all $A\subseteq\N$ with $\overline{d}(A)>0$ the following holds:
\begin{itemize}
\item[(a)]  If $A$ is $q$-bounded, then:
\begin{itemize}
\item[(a1)] If $A\subseteq^d supp(x)$, then $A+1\subseteq^d supp(x), \ A\subseteq^d supp_q(x)$ and there exists $A'\subseteq A$ with $d(A\setminus A')=0$ such that $\lim\limits_{n\in A'} \frac{c_{n+1}+1}{q_{n+1}}=1$ in $\R$.

    Moreover, if $A+1$ is $q$-bounded, then $A+1\subseteq^d supp_q(x)$.
\item[(a2)] If $d(A\cap supp(x))=0$, then there exists $A'\subseteq A$ with $d(A\setminus A')=0$ such that $\lim\limits_{n\in A'} \frac{c_{n+1}}{q_{n+1}}=0$ in $\R$.

    Moreover, if $A+1$ is $q$-bounded, then $d((A+1)\cap supp(x))=0$ as well.
\end{itemize}
\item[(b)] If $A$ is $q$-divergent, then $\lim\limits_{n\in B} \frac{c_n}{q_n}=0$ in $\T$ for some $B\subseteq A$ with $d(A\setminus B)=0$.
\end{itemize}
\end{theorem}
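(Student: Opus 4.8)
The plan is to prove both implications, with the single recursion $a_{n+1}x=q_{n+1}a_nx$ in $\T$ — i.e.\ equation~(\ref{eq6}) read at level $n+1$, which says $\{a_nx\}=\frac{c_{n+1}+\{a_{n+1}x\}}{q_{n+1}}$ — and its telescoped form~(\ref{eq5}) doing all of the work. Two elementary facts are used throughout: $\{a_nx\}$ and $\frac{c_{n+1}}{q_{n+1}}$ differ by $\frac{\{a_{n+1}x\}}{q_{n+1}}<\frac1{q_{n+1}}$, so control of $\frac{c_{n+1}}{q_{n+1}}$ plus control of $q_{n+1}$ gives control of $\{a_nx\}$; and, by the density-one-subsequence criterion for statistical convergence (in $\T$), by Lemma~\ref{suf1}, and by the identity $t^s_{(a_n)}(\T)=\bigcap_B t^s_{(a_B)}(\T)$, we have $x\in t^s_{(a_n)}(\T)$ iff for every $A$ with $\overline d(A)>0$ there is $A_1\subseteq A$ with $d(A\setminus A_1)=0$ along which $a_nx\to0$ — and then, shrinking to $A_1\cap(A_1+1)$, also $a_{n-1}x\to0$.

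\emph{Necessity.} Assume $x\in t^s_{(a_n)}(\T)$ and $\overline d(\supp(x))>0$, fix $A$ with $\overline d(A)>0$, and pick $A_1\subseteq A$, $d(A\setminus A_1)=0$, with $a_nx\to0$ and $a_{n-1}x\to0$ in $\T$ along $A_1$; then $A_1$ is $q$-bounded (resp.\ $q$-divergent) if $A$ is, and $x\in t_{(a_{A_1-1})}(\T)$, so Lemma~\ref{nec} applies with $B=A_1$. In case~(a1) Lemma~\ref{nec}(i) already gives $A\subseteq^d\supp_q(x)$ and a density-one subset $A'$ of $A_1$ with $\{a_{n-1}x\}\to1$; since $c_n=q_n-1$ for almost all $n\in A'$, equation~(\ref{eq6}) becomes $\{a_{n-1}x\}=1-\frac{1-\{a_nx\}}{q_n}$, whence (as $q_n$ is bounded on $A$) $\{a_nx\}\to1$, and substituting this into~(\ref{eq6}) at level $n+1$ and squeezing $\frac{c_{n+1}}{q_{n+1}}\le\frac{c_{n+1}+\{a_{n+1}x\}}{q_{n+1}}<\frac{c_{n+1}+1}{q_{n+1}}\le1$ forces $\frac{c_{n+1}+1}{q_{n+1}}\to1$; this yields $c_{n+1}\neq0$ eventually along $A'$, hence $A+1\subseteq^d\supp(x)$, and if in addition $q_{n+1}$ is bounded on $A$ the nonnegative integer $q_{n+1}-c_{n+1}-1\to0$ vanishes eventually, i.e.\ $A+1\subseteq^d\supp_q(x)$. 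Case~(a2) is analogous and easier: Lemma~\ref{nec}(ii) gives $\{a_{n-1}x\}\to0$ along a density-one $A'\subseteq A_1$, where $c_n=0$, so $\{a_{n-1}x\}=\frac{\{a_nx\}}{q_n}$ forces $\{a_nx\}\to0$ in $\R$, hence $\frac{c_{n+1}}{q_{n+1}}\le\{a_nx\}\to0$, with $c_{n+1}\to0$ when $q_{n+1}$ is bounded on $A$. Case~(b): from~(\ref{eq6}) at level $n$, $\frac{c_n}{q_n}=\{a_{n-1}x\}-\frac{\{a_nx\}}{q_n}$ with $\frac{\{a_nx\}}{q_n}<\frac1{q_n}\to0$, so $\frac{c_n}{q_n}\to0$ in $\T$ along $A_1$.

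\emph{Sufficiency.} If $d(\supp(x))=0$, then for each $k$ the set $D_k=\{n:c_n=c_{n+1}=\dots=c_{n+k}=0\}$ has density $1$ and~(\ref{eq5}) gives $\{a_nx\}<2^{-k}$ on $D_k$; since $\mathcal I_d$ is a $P$-ideal there is a density-zero $E$ with $\N\setminus D_k\subseteq^* E$ for all $k$, so $a_nx\to0$ in $\T$ along the density-one set $\N\setminus E$ and $x\in t^s_{(a_n)}(\T)$. Otherwise $\overline d(\supp(x))>0$; suppose $x\notin t^s_{(a_n)}(\T)$, so by Lemma~\ref{suf1} there are $\eps\in(0,\tfrac12)$ and $A$ with $\overline d(A)>0$ and $\{a_nx\}\in[\eps,1-\eps]$ for all $n\in A$. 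If $B_0:=\{n\in A:q_{n+1}>2/\eps\}$ has $\overline d(B_0)>0$, then on $B_0$ equation~(\ref{eq6}) at level $n+1$ forces $0<c_{n+1}<q_{n+1}-1$ and $\frac{c_{n+1}}{q_{n+1}}\in[\tfrac\eps2,1-\tfrac\eps2]$, so $P:=B_0+1\subseteq\supp(x)\setminus\supp_q(x)$ has $\overline d(P)>0$; a $q$-bounded subset of $P$ of positive upper density would be thrown into $\supp_q(x)$ by~(a1), impossible, while otherwise Lemma~\ref{suf2} produces a $q$-divergent subset of positive upper density on an infinite part of which~(b) gives $\frac{c_n}{q_n}\to0$ in $\T$, contradicting $\frac{c_n}{q_n}\in[\tfrac\eps2,1-\tfrac\eps2]$. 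Hence $q_{n+1}$ is bounded on a set $A'$ with $\overline d(A')>0$; splitting according to whether $c_{n+1}\ge1$ or $c_{n+1}=0$ on $A'$ we obtain a set $Y_1$ with $\overline d(Y_1)>0$ that is $q$-bounded and either contained in $\supp(x)$ or has $d(Y_1\cap\supp(x))=0$, with $Y_1-1\subseteq A$. Now iterate: at step $j$, if $Y_j$ has a $q$-bounded subset of positive upper density, apply~(a1) (resp.~(a2)) and its ``moreover'' clause to produce $Y_{j+1}\subseteq Y_j+1$ of the same kind, recording that $Y_{j+1}-1\subseteq^d\supp_q(x)$ (resp.\ $Y_{j+1}-1\subseteq^d\N\setminus\supp(x)$). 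If this does not stop after $K_0$ steps (where $2^{-K_0}<\eps$), then along a set of $m_0$'s of positive upper density $c_{m_0+1},\dots,c_{m_0+K_0}$ are all $q-1$ (resp.\ all $0$), and~(\ref{eq5}) gives $\{a_{m_0}x\}\ge1-2^{-K_0}>1-\eps$ (resp.\ $\le2^{-K_0}<\eps$), contradicting $m_0\in A$. If the iteration stops at step $j_0$ because $Y_{j_0}$ has no $q$-bounded subset of positive upper density, Lemma~\ref{suf2} gives a $q$-divergent $S\subseteq Y_{j_0}$; combining $q\to\infty$ on $S$ with the limit clause of~(a1) from step $j_0-1$ (resp.\ the conclusion of~(a2)) gives $\frac{c_m}{q_m}\to1$ (resp.\ $\to0$ in $\R$) on an infinite subset of $S$, and propagating this back through the finitely many earlier, $q$-bounded, steps via~(\ref{eq6}) forces $\{a_{m_0}x\}\to1$ (resp.\ $\to0$), again contradicting $m_0\in A$. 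Thus $x\in t^s_{(a_n)}(\T)$.

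The crux is the $q$-bounded case of the sufficiency direction. Because $q$-boundedness is not preserved under $A\mapsto A+1$, it must be carried along the iteration by repeatedly shrinking to positive-upper-density subsets, which is precisely what the ``moreover'' clauses of~(a1), (a2) and of Lemma~\ref{suf2} are for, and one must verify that the iteration terminates — this is where the exponential growth $q_{n+1}\cdots q_{n+k}\ge2^k$ in~(\ref{eq5}) is indispensable. The remaining effort is bookkeeping: tracking these nested subsets, and which of~(\ref{eq1})--(\ref{eq6}) is being read at which level and with which portion of the support held fixed.
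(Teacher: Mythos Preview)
Your argument is correct and follows the paper's strategy closely. Necessity is identical: you use Lemma~\ref{nec} and the recursion~(\ref{eq6}) exactly as the paper does. For sufficiency you recast the paper's direct argument as a proof by contradiction: instead of starting from an arbitrary $A$ and building a subset along which $\{a_{n-1}x\}\to0$ (the paper's Case~(i)/(ii) split and Claim~\ref{remark1}), you start from the ``bad'' set $A=\{n:\{a_nx\}\in[\eps,1-\eps]\}$ and iterate (a1)/(a2) forward until either enough $\supp_q$-terms (resp.\ zero terms) accumulate to force $\{a_{m_0}x\}$ out of $[\eps,1-\eps]$ via~(\ref{eq5}), or a $q$-divergent step is hit and the limit clause of (a1)/(a2) together with~(\ref{eq6}) propagates the contradiction back. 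This is exactly the content of the paper's Claim~\ref{remark1} and Subcases~(i$_a$)/(i$_b$), just run in reverse; your initial ``$\overline d(B_0)>0$'' branch is the analogue of the paper's Case~(ii).

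Two minor bookkeeping points worth tightening: (1) in the iteration you write ``if $Y_j$ has a $q$-bounded subset of positive upper density'', but what you actually need to check at each step is whether the \emph{new} set $Y_{j-1}+1$ has such a subset before you call it $Y_j$ and apply (a1)/(a2); (2) the clause you record, ``$Y_{j+1}-1\subseteq^d\supp_q(x)$'', is the \emph{main} conclusion of (a1), not the ``moreover'' clause --- the latter is what lets you stay inside $\supp_q(x)$ (resp.\ outside $\supp(x)$) at the next level when $Y_j+1$ is $q$-bounded. Neither affects correctness, but the paper's formulation via Claim~\ref{remark1} (packaging the iterated conditions for $L_k(A)$ once and for all) avoids this indexing juggling.
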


\begin{proof}
{\bf Necessity}: Suppose $\overline{d}(supp(x))>0$ and $x\in t^s_{(a_n)}(\T)$. Therefore, there exists $M \subseteq \N$ with $d(M) = 1$ such that
 \begin{equation}\label{M}
 \lim\limits_{n\in M} \{a_{n-1}x\}=0 \mbox{ in $\T$.}
  \end{equation}
  Consider any $A\subseteq\N$ with $\overline{d}(A)>0$. We take $B=M\cap A$. Then $B\subseteq A$ and $d(A\setminus B)=d(A\cap (\N\setminus M))=0$. As $B\subseteq M$, from equation (\ref{M}), we get $\lim\limits_{n\in B} \{a_{n-1}x\}=0$ in $\T$. Consequently, there exists $B\subseteq A$ with $d(A\setminus B)=0$ such that $x\in t_{(a_{B-1})}(\T)$.

 \noindent\textbf{(a)}  Suppose first that $A$ is $q$-bounded. The following two cases can arise:\\

 \textbf{(a1)}  First, suppose $A\subseteq^d supp(x)$. Then $B\subseteq A$ is $q$-bounded and $B\subseteq^d supp(x)$. Since, $x\in t_{(a_{B-1})}(\T)$ and $B$ is $q$-bounded, from Lemma \ref{nec} we conclude that $B\subseteq^d supp_q(x)$ and $\lim\limits_{n\in A'} \{a_{n-1}x\}=1$ in $\R$, where $A'\subseteq B$ with $d(B\setminus A')=0$. \\
     Therefore, from equation (\ref{eq6})
     \begin{eqnarray*}
     1 \ &=& \ \lim\limits_{n\in A'}(\frac{c_n}{q_n}+\frac{\{a_nx\}}{q_n})\ = \ \lim\limits_{n\in A'}(\frac{q_n-1+\{a_nx\}}{q_n}) \\
      &=& \lim\limits_{n\in A'}(1-\frac{1-\{a_nx\}}{q_n})
      \Rightarrow \ \lim\limits_{n\in A'} \frac{1-\{a_nx\}}{q_n}=0.
     \end{eqnarray*}
     Hence, we get
     \begin{equation}\label{B1}
     \lim\limits_{n\in A'} \{a_nx\}=1 \mbox{ (Since, $A'\subseteq B$ is $q$-bounded)}.
     \end{equation}
     Now from the definition of canonical representation (\ref{canrep}), $c_{n+1}\leq q_{n+1}-1$ for all $n\in\N$. Again from equation (\ref{eq6}), we have
     $$
     \{a_nx\}=\frac{c_{n+1}}{q_{n+1}}+\frac{\{a_{n+1}x\}}{q_{n+1}}<\frac{c_{n+1}+1}{q_{n+1}}\leq 1.
     $$
     Hence from equation (\ref{B1}), it follows that
     \begin{equation}\label{B2}
     1 \ = \ \lim\limits_{n\in A'} \{a_nx\}\ \leq\ \lim\limits_{n\in A'}\frac{c_{n+1}+1}{q_{n+1}}\ \leq\ 1 \mbox{ \ ~ \ ~ \ ~ \ i.e. }  \lim\limits_{n\in A'}\frac{c_{n+1}+1}{q_{n+1}}\ = \ 1
     \end{equation}
     Now, $q_{n+1}\geq 2$ for all $n\in\N$. From equation (\ref{B2}), we can observe that $c_{n+1}+1 > 1$ ( i.e. $c_{n+1}\neq0$ ) for almost all $n\in A'$. Which implies $A'+1\subseteq^* supp(x)$. Since, $d(B\setminus A')=0$, we obtain $B+1\subseteq^d supp(x)$.

     As $B\subseteq A$ and $d(A\setminus B)=0$, we must have $A+1\subseteq^d supp(x)$, $A\subseteq^d supp_q(x)$ and $\lim\limits_{n\in A'} \frac{c_{n+1}+1}{q_{n+1}}\ = \ 1$ for some $A'\subseteq A$ where $d(A\setminus A')=0$.
     If $A+1$ is $q$-bounded, proceeding as in the first part of the proof, we get $A+1\subseteq^d supp_q(x)$.\\

 \textbf{(a2)}    Now let $d(A\cap supp(x))=0$. Since $B\subseteq A$, we must have $d(B\cap supp(x))=0$. Then from Lemma \ref{nec}, we can conclude that $\lim\limits_{n\in A'} \{a_{n-1}x\}=0$ in $\R$ for some $A'\subseteq B$ with $d(B\setminus A')=0$. Therefore putting $k=1$ in equation (\ref{eq0}) and equation (\ref{eq5}), we get
     $$
     \lim\limits_{n\in A'} (\frac{c_n}{q_n} + \frac{c_{n+1}}{q_n q_{n+1}} + \frac{\{a_{n+1}x\}}{q_n q_{n+1}}) \ = \ \lim\limits_{n\in A'} \{a_{n-1}x\} \ = \ 0
     $$
     \begin{equation}\label{B3}
     \Rightarrow \ \lim\limits_{n\in A'} \frac{c_{n+1}}{q_n q_{n+1}} \ = \ \lim\limits_{n\in A'} \frac{\{a_{n+1}x\}}{q_n q_{n+1}}\ = \ 0 \mbox{ (Since $c_n, \{a_nx\} \geq 0$ and $q_n>0$ )}.
     \end{equation}
     Now as $A'\subseteq B$ is $q$-bounded, equation (\ref{B3}) implies that $\lim\limits_{n\in A'} \frac{c_{n+1}}{q_{n+1}}=0$ in $\R$, where $A'\subseteq B\subseteq A$ and $d(A\setminus A')=0$.

     Moreover, if $A+1$ is $q$-bounded, then vanishing of the last limit implies that $(A'+1)\cap supp(x)$ is finite. Thus $d((A+1)\cap supp(x))=0$ (Since, $d((A+1)\setminus (A'+1))=d(A\setminus A')=0$).  \\

    \noindent\textbf{(b)} Suppose $A$ is $q$-divergent i.e. $\lim\limits_{n\in A} q_n = \infty$. Then from equation (\ref{eq6}), we get

   $\lim\limits_{n\in B} (\frac{c_n}{q_n}+\frac{\{a_nx\}}{q_n})  =   \lim\limits_{n\in B} \{a_{n-1}x\} =  0 \mbox{ in } \T ~\mbox{for some}~ B\subseteq A ~\mbox{ with}~ d(A\setminus B)=0$\\

    $$\Rightarrow \ \lim\limits_{n\in B} \frac{c_n}{q_n}  =  0 \mbox{ in } \T \ \mbox{ (Since, $\{a_nx\}<1$ and $\lim\limits_{n\in B} q_n = \infty)$}.
    $$

\begin{claim}\label{remark1}
 Before proving the sufficiency of the conditions, we need to reformulate the necessary conditions in a stronger iterated version. For any $A\in [\N]^{\aleph_0}$ and $k\in\N\cup\{0\}$, we define $L_k(A)=\bigcup\limits_{i=0}^{k} (A+i)$. Now putting $k=k+1$ in equation (\ref{eq0}), we obtain
 \begin{equation}\label{sigma}
 \sigma_{n,k+1}=\sigma_{n,k}+\frac{c_{n+k+1}}{q_n q_{n+1} \ldots q_{n+k+1}}.
 \end{equation}
 Therefore, from equation (\ref{eq5}) and equation (\ref{sigma}), it follows that
 \begin{equation}\label{split}
 \{a_{n-1}x\}=\sigma_{n,k+1}+\frac{\{a_{n+k+1}x\}}{q_n q_{n+1} \ldots q_{n+k+1}} \ = \ \sigma_{n,k} + \frac{c_{n+k+1}}{q_n q_{n+1} \ldots q_{n+k+1}}+ \frac{\{a_{n+k+1}x\}}{q_n q_{n+1} \ldots q_{n+k+1}}
 \end{equation}
 \begin{equation}\label{convergence}
 \Rightarrow \ \sigma_{n,k}\leq \ \{a_{n-1}x\} \ < \ \sigma_{n,k} + \frac{c_{n+k+1}}{q_n q_{n+1} \ldots q_{n+k+1}} + \frac{1}{2^{(k+2)}}.
 \end{equation}
  \\
 Let $x\in\T$ has canonical representation (\ref{canrep}) such that (a) and (b) of Theorem \ref{mainth} hold. Let $A\subseteq \N$ be $q$-bounded with $\overline{d}(A)>0$. If $L_k(A)$ is $q$-bounded for some $k\in\N\cup\{0\}$, then the following hold:
 \begin{itemize}
 \item[(i)]  If $A\subseteq^d supp(x)$, then $L_k(A)\subseteq^d supp_q(x)$ and $\lim\limits_{n\in A'+k+1} \frac{c_n+1}{q_n}=1$ in $\R$ for some $A'\subseteq A$ with $d(A\setminus A')=0$. Therefore there exists $n_k\in\N$ such that for all $n\in A'$ with $n\geq n_k$,
     \begin{equation}\label{sigmarec}
     \sigma_{n,k}=1-\frac{1}{q_n q_{n+1} \ldots q_{n+k}} \geq 1-\frac{1}{2^{k+1}}.
     \end{equation}
 Moreover if $A+k+1$ is $q$-divergent, then
 \begin{equation}\label{divergerec}
 \lim\limits_{n\in A+k+1} \frac{c_n}{q_n} \ = \ \lim\limits_{n\in A} \frac{c_{n+k+1}}{q_{n+k+1}} \ = \ 1 \mbox{ in } \R.
 \end{equation}
\item[(ii)]  If $d(A\cap supp(x))=0$, then $d(L_k(A)\cap supp(x))=0$ and $\lim\limits_{n\in A'} \frac{c_{n+k+1}}{q_{n+k+1}}=0$  in $\R$ for some $A'\subseteq A$ and $d(A\setminus A')=0$.
\end{itemize}
\end{claim}
\noindent{\bf Sufficiency}: If $d(supp(x))=0$, then from \cite[Theorem 4.3]{DPK} it readily follows that $x\in t^s_{(a_n)}(\T)$. So let $\overline{d}(supp(x))>0$ and $supp(x)$ satisfy conditions $(a)$ and $(b)$. To show that $x\in t^s_{(a_n)}(\T)$, in view of Lemma \ref{suf1} it is sufficient to check the convergence criterion: for all $A\subseteq\N$ with $\overline{d}(A)>0$, there exists $B'\subseteq A$ such that $\lim\limits_{n\in B'} a_{n-1}x=0$ in $\T$.
Indeed without any loss of generality, we can assume that either $d(A\cap supp(x))=0$ or $A\subseteq^d supp(x)$.\\

\noindent{\bf Case (i)}:  First let $A$ be $q$-bounded.

{\bf Subcase (i$_a$)}:  Let us first assume that $L_k(A)$ is $q$-bounded for all $k\in\N\cup\{0\}$.
    Let $\varepsilon>0$ be given. Choose $k\in\N$ such that $\frac{1}{2^{k+1}}<\varepsilon$.

      $\ast~~$ Let $A\subseteq^d supp(x)$. Then, from (i) of Claim \ref{remark1}, $L_k(A)\subseteq^d supp_q(x)$.  Therefore, there exists $B'\subseteq A$ such that for all $n\in B'$,
        $$
        \sigma_{n,k} =1-\frac{1}{q_n q_{n+1} \ldots q_{n+k}}\geq \ 1-\frac{1}{2^{k+1}}\ > \ 1-\varepsilon
        $$
        $$
        \Rightarrow \  1-\varepsilon<\sigma_{n,k}\ \leq \ \{a_{n-1}x\}\ < 1 \ ~ \  \forall\ n\in B' \mbox{ (From equation (\ref {convergence})).}
        $$

     $\ast~~$   Let $d(A\cap supp(x))=0$. Then, from (ii) of Claim \ref{remark1}, $d(L_k(A)\cap supp(x))=0$ and $\lim\limits_{n\in B} \frac{c_{n+k+1}}{q_{n+k+1}}=0$  in $\R$ for some $B\subseteq A$ with $d(A\setminus B)=0$. So, there exists $B'\subseteq B$ such that $\sigma_{n,k}=0$ and $\frac{c_{n+k+1}}{q_{n+k+1}}<\varepsilon$ for all $n\in B'$. Therefore, from equation (\ref{convergence}), we get
        $$
        \{a_{n-1}x\} < \ \sigma_{n,k} + \frac{c_{n+k+1}}{q_n q_{n+1} \ldots q_{n+k+1}} + \frac{1}{2^{(k+2)}}\ < 2\varepsilon \ \forall \ n\in B'.
        $$

    Thus in both cases, we have $\lim\limits_{n\in B'} \{a_{n-1}x\}=0$ in $\T$ for some $B'\subseteq A$, as required.\\

    {\bf Subcase (i$_b$)}:  We assume that there exists an integer $k\geq0$ such that $A+k+1$ is not $q$-bounded but $A+i$ is $q$-bounded for all $i=0,1,2,\ldots ,k$. If there exists an $A'\subseteq A$ such that $\overline{d}(A')>0$ and $A'+k+1$ is $q$-bounded, then without any loss of generality we can start with $A'$ in place of $A$. If this process does not terminate after finitely many steps then we can conclude that there exists $B\subseteq A$ with $\overline{d}(B)>0$ such that $L_k(B)$ is $q$-bounded for all $k\in\N$. Consequently, we can consider $B$ in place of $A$ and proceed as in Subcase (i$_a$).

     Now let us consider the case when there does not exist any $A'\subseteq A$ such that $\overline{d}(A')>0$ and $A'+k+1$ is $q$-bounded. Therefore from Lemma \ref{suf2}, there exists $B\subseteq A$ with $d(A\setminus B)=0$ such that $B+k+1$ is $q$-divergent i.e. $\lim\limits_{n\in B} q_{n+k+1} =\infty$. Clearly $L_k(B)$ is $q$-bounded. Further more
    \begin{equation}\label{sufeq0}
    \lim\limits_{n\in B} \frac{\{a_{n+k+1}x\}}{q_n q_{n+1} \ldots q_{n+k+1}} \ \leq \ \lim\limits_{n\in B} \frac{1}{q_{n+k+1}}=0.
    \end{equation}
    Therefore, from equation (\ref{split}) and equation (\ref{sufeq0}), we get
    \begin{eqnarray*}\label{sufeq1}
    \lim\limits_{n\in B} \{a_{n-1}x\} \ &=& \ \lim\limits_{n\in B} \sigma_{n,k} +  \lim\limits_{n\in B} \frac{c_{n+k+1}}{q_n q_{n+1} \ldots q_{n+k+1}}+ \lim\limits_{n\in B} \frac{\{a_{n+k+1}x\}}{q_n q_{n+1}\ldots q_{n+k+1}} \\
     &=& \lim\limits_{n\in B} \sigma_{n,k} +  \lim\limits_{n\in B} \frac{c_{n+k+1}}{q_n q_{n+1} \ldots q_{n+k+1}}.
    \end{eqnarray*}

     $\ast~~$ Let $A\subseteq^d supp(x)$. Therefore $B\subseteq^d supp(x)$. Consequently from equation (\ref{sigmarec}) of Claim \ref{remark1} and equation (\ref{sufeq1}), we get
    \begin{eqnarray*}
    \lim\limits_{n\in B'} \{a_{n-1}x\} \ &=& \ \lim\limits_{n\in B'}(1-\frac{1}{q_n q_{n+1}\ldots q_{n+k}}+\frac{c_{n+k+1}}{q_n q_{n+1}\ldots q_{n+k+1}})\\
    &=& \ \lim\limits_{n\in B'}(1+\frac{1}{q_n q_{n+1}\ldots q_{n+k}}\cdot (\frac{c_{n+k+1}}{q_{n+k+1}}-1)) \ = \ 1
    \end{eqnarray*}
    for some $B'\subseteq B$ with $d(B\setminus B')=0$.

      $\ast~~$ Next let $d(A\cap supp(x))=0$. Then there exists $B\subseteq A$ such that $\sigma_{n,k}=0$ for all $n\in B$. Subsequently from (ii) of Claim \ref{remark1} and equation (\ref{sufeq1}), we have
    $$
    \lim\limits_{n\in B'} \{a_{n-1}x\} \ = \ \lim\limits_{n\in B'} \frac{c_{n+k+1}}{q_n q_{n+1}\ldots q_{n+k+1}} \ \leq \ \lim\limits_{n\in B'} \frac{c_{n+k+1}}{q_{n+k+1}}\ =0
    $$
    for some $B'\subseteq B$ with $d(B\setminus B')=0$. Thus in both cases, we again obtain that $\lim\limits_{n\in B'} \{a_{n-1}x\}=0$ in $\T$ for some $B'\subseteq A$.

    \noindent{\bf Case (ii)}:   We assume that $A$ is not $q$-bounded. If there exists $A'\subseteq A$ such that $\overline{d}(A')>0$ and $A'$ is $q$-bounded then we can proceed as in Case (i) and consider $A'$ in place of $A$. So, let us assume that there does not exist any $A'\subseteq A$ such that $\overline{d}(A')>0$ and $A'$ is $q$-bounded. Then from Lemma \ref{suf2}, there exists $B\subseteq A$ with $d(A\setminus B)=0$ such that $B$ is $q$-divergent i.e. $\lim\limits_{n\in B} q_n=\infty$. From hypothesis, we have $\lim\limits_{n\in B'} \frac{c_n}{q_n} = 0$ in $\T$ for some $B'\subseteq B$ with $d(B\setminus B')=0$. Therefore, from equation (\ref{eq6}), we obtain
    $$
    \lim\limits_{n\in B'}\{a_{n-1}x\} \ = \ \lim\limits_{n\in B'}(\frac{c_n}{q_n}+\frac{\{a_nx\}}{q_n}) \ =0 \mbox{ in $\T$  (Since $\lim\limits_{n\in B'} \frac{\{a_nx\}}{q_n} < \lim\limits_{n\in B'} \frac{1}{q_n} \ =0$ ).}
    $$
 Hence in all cases, we can conclude that for any $A\subseteq \N$ with $\overline{d}(A)>0$, there exists $B'\subseteq A$ such that $\lim\limits_{n\in B'} \{a_{n-1}x\}=0$ in $\T$. This shows that $x\in t^s_{(a_n)}(\T)$ i.e. $x$ is a topological $s$-torsion element of $\T$.
\end{proof}

\begin{remark}\label{remark}
Since, for all $n\not\in supp(x)$, we have $c_n=0$, it is sufficient to consider only subsets of $supp(x)$ in item $(b)$ of Theorem \ref{mainth}.
\end{remark}

 Theorem 2.1 obviously provides the complete solution of the open problem Problem 6.10 posed in \cite{DPK}. In the remaining part of the article we follow in the line of investigations of \cite{DI1} which would show that in certain circumstances, one can obtain more simplified  equivalent solutions for  Problem 6.10. Before proceeding further, let us recall the following notion of "splitting" sequences which were considered in \cite{DI1}.

\begin{definition}\cite[Definition 3.10]{DI1}
A sequence $(q_n)$ of natural numbers has the splitting property if there exists a partition $\N= B\cup I$, such that the following statements hold:
\begin{itemize}
\item[(a)]  $B$ and $I$ are either empty or infinite;
\item[(b)]  $I$ is $q$-divergent, in case $I$ is infinite;
\item[(c)]  $B$ is $q$-bounded, in case $B$ is infinite.
\end{itemize}
Here, $B$ and $I$ witness the splitting property for $(q_n)$, where $B$ and $I$ can be uniquely defined up to a finite set.
\end{definition}
\begin{proposition}\label{oldsplit}\cite[Proposition 3.11]{DI1}
A sequence $(q_n)$ has the splitting property if and only if there exists a natural number $M$ such that the set $\{n\in\N: \ q_n\in [M,m]\}$ is finite for every $m>M$.
\end{proposition}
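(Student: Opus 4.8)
The plan is to prove the two implications of the biconditional separately, in each case reading the relevant object off the hypothesis: from a witnessing partition $\N = B\cup I$ one extracts the integer $M$, and conversely from $M$ one builds the partition by thresholding the values $q_n$ at $M$. No machinery beyond the definitions of $q$-bounded and $q$-divergent is needed; the whole argument is elementary set-counting.

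For the direction ``splitting property $\Rightarrow$ existence of $M$'', I would fix a partition $\N = B\cup I$ witnessing the splitting property and split into cases according to which of $B,I$ is infinite (in a genuine witness each is empty or infinite, so this is exhaustive). If $B=\emptyset$, then $I=\N$ is $q$-divergent, hence $q_n\to\infty$, and any $M$ works because $\{n:q_n\le m\}$ is finite for every $m$. If $B$ is infinite, it is $q$-bounded, say $q_n\le K$ for $n\in B$; I would set $M=K+1$ and note that $\{n:q_n\in[M,m]\}$ is disjoint from $B$, hence contained in $I$. If $I$ is finite this set is finite; if $I$ is infinite it is $q$-divergent, so $\{n\in I:q_n\le m\}$ is finite, and again $\{n:q_n\in[M,m]\}$ is finite for every $m>M$.

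For the converse I would, given $M$ as in the statement, set $B=\{n:q_n<M\}$ and $I=\{n:q_n\ge M\}$, obtaining a partition of $\N$. Then $B$ is $q$-bounded (by $M-1$), and $I$ is $q$-divergent because $\{n\in I:q_n\le m\}=\{n:q_n\in[M,m]\}$ is finite for every $m$. The only remaining point is to force $B$ and $I$ to be empty or infinite: if $B$ is finite and nonempty, then from $\{n:q_n\le m\}=B\cup\{n:q_n\in[M,m]\}$ for $m\ge M$ one gets $q_n\to\infty$, so I would replace the partition by $B=\emptyset$, $I=\N$; symmetrically, if $I$ is finite and nonempty then $(q_n)$ is bounded and I would take $B=\N$, $I=\emptyset$; and $B,I$ cannot both be finite since their union is $\N$. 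I expect this bookkeeping around the ``empty or infinite'' clause to be the only mildly delicate step; everything else is a direct unwinding of definitions.
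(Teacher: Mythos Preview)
The paper does not actually prove this proposition; it is quoted verbatim from \cite[Proposition 3.11]{DI1} and stated without proof. Your argument is correct and self-contained: extracting $M$ from a bound on the $q$-bounded block $B$ (or taking any $M$ when $B=\emptyset$), and conversely thresholding at $M$ to build the partition, is exactly the natural proof.

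It is worth noting that the paper \emph{does} prove the density analogue, Proposition~\ref{newsplit}, and the structure there is essentially the one you wrote down: in the forward direction one takes $M=1$ when $B=\emptyset$ and $M=1+\max_{n\in B'}q_n$ otherwise, and in the converse one sets $B'=\{n:q_n<M\}$, $D'=\N\setminus B'$ and then adjusts to meet the ``empty or positive upper density'' clause. Your proof is the finite/cofinite version of that same template, so even though there is no proof in the paper to compare against directly, your approach matches the method the authors use for the parallel result.
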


As a natural consequence, we can think of generalizing the idea of a splitting sequence using natural density.
\begin{definition}\label{dsplitdef}
We say that, a sequence $(q_n)$ of natural numbers has the $d$-splitting property if there exists a partition $\N=B\cup D$, such that the following statements hold:
\begin{itemize}
\item[(a)]  $B$ and $D$ are either empty or $\overline{d}(B),\overline{d}(D)>0$.
\item[(b)]  If $\overline{d}(B)>0$, then there exists $B'\subseteq \N$ with $d(B\triangle B')=0$ such that $B'$ is $q$-bounded.
\item[(b)]  If $\overline{d}(D)>0$, then there exists $D'\subseteq \N$ with $d(D\triangle D')=0$ such that $D'$ is $q$-divergent.
\end{itemize}
Here, $B$ and $D$ witness the $d$-splitting property for $(q_n)$, where $B$ and $D$ can be uniquely determined up to a zero density set (i.e. if $B_1\cup D_1$ is another partition of $\N$, witnessing the $d$-splitting property for $(q_n)$, then $B_1=^d B$ and $D_1=^d D$).
\end{definition}

\begin{proposition}\label{newsplit}
A sequence $(q_n)$ has the $d$-splitting property if and only if there exists a natural number $M$ such that $d(\{n\in\N: \ q_n\in [M,m]\})=0$ for every $m>M$.
\end{proposition}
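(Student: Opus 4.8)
The proof will parallel that of Proposition \ref{oldsplit}, with the ideal of finite sets replaced by the density ideal $\mathcal I_d=\{A\subseteq\N:d(A)=0\}$; the one genuinely new ingredient is the $P$-ideal property of $\mathcal I_d$, exactly as exploited in the proof of Lemma \ref{suf2}. Throughout I would write, for $m\geq M$, $E_m=\{n\in\N:M\leq q_n\leq m\}$, and observe first that the condition in the statement for all $m>M$ also forces $d(E_M)=0$ (since $\{n:q_n=M\}\subseteq E_{M+1}$); thus all the $E_m$ are density-zero sets forming an increasing chain, and $\bigcup_{m\geq M}E_m=\{n:q_n\geq M\}$.

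For necessity, suppose $\N=B\cup D$ witnesses the $d$-splitting property. I would pick $B'=^d B$ which is $q$-bounded, say with bound $K$, when $B\neq\emptyset$ (and take $B'=\emptyset$, $K=1$ otherwise), and $D'=^d D$ which is $q$-divergent when $D\neq\emptyset$, and set $M=K+1$. For $m>M$ put $S_m=\{n:q_n\in[M,m]\}$. The three points are: $S_m\cap B'=\emptyset$, because $q_n\leq K<M$ on $B'$; $S_m\cap D'$ is finite, because $q_n\to\infty$ along $D'$; and $\N\setminus(B'\cup D')\subseteq(B\triangle B')\cup(D\triangle D')$ has density zero, because $\N=B\cup D$ and both symmetric differences are null. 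Hence $S_m\subseteq(\N\setminus(B'\cup D'))\cup(S_m\cap D')$ has density zero. The degenerate cases $B=\emptyset$ or $D=\emptyset$ are the same computation with the corresponding primed set taken empty.

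For sufficiency, fix an $M$ as in the statement and set $B_0=\{n:q_n<M\}$ and $D_0=\N\setminus B_0=\bigcup_{m\geq M}E_m$. By the $P$-ideal property of $\mathcal I_d$, choose $C$ with $d(C)=0$ such that $E_m\setminus C$ is finite for every $m\geq M$; intersecting with $D_0$ I may assume $C\subseteq D_0$. Then I would split into cases according to the upper densities of $B_0$ and $D_0$. If both are positive, take the partition $\N=B_0\cup D_0$ with witnesses $B'=B_0$ ($q$-bounded by $M-1$) and $D'=D_0\setminus C$; here $D\triangle D'=C$ is null and $\{n\in D':q_n\leq m\}=E_m\setminus C$ is finite for every $m\geq M$, so $D'$ is $q$-divergent. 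If $\overline d(D_0)=0$, take $B=\N$, $D=\emptyset$ and $B'=B_0$ (symmetric difference $D_0$ with $\N$, which is null, and $B'$ is $q$-bounded). If $\overline d(B_0)=0$, take $B=\emptyset$, $D=\N$ and $D'=\N\setminus(B_0\cup C)$ (symmetric difference $B_0\cup C$ with $\N$, which is null, and $q$-divergent by the same count $\{n\in D':q_n\leq m\}=E_m\setminus C$). In each case $\N=B\cup D$ witnesses the $d$-splitting property, completing the proof.

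The routine part is the density bookkeeping for the symmetric-difference and boundedness conditions; the substantive step is the application of the $P$-ideal property of $\mathcal I_d$, which collapses the increasing family $(E_m)_{m\geq M}$ of density-zero sets into a single density-zero $C$ absorbing each of them modulo a finite set. The only genuine obstacle I anticipate is handling the degenerate situation where $B_0$ or $D_0$ fails to have positive upper density and hence must be merged into the other half of the partition, which is exactly what makes the three-way case split above necessary.
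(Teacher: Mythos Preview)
Your proof is correct and follows essentially the same route as the paper: both directions use the partition $B_0=\{n:q_n<M\}$, $D_0=\N\setminus B_0$ with a three-way case split on upper densities, and the necessity argument is the same decomposition into the bounded and divergent pieces. The one noteworthy difference is that you explicitly invoke the $P$-ideal property of $\mathcal I_d$ to produce a $q$-divergent witness $D'$ for $D_0$, whereas the paper simply asserts ``Clearly, $B$ and $D$ witness the $d$-splitting property''; your treatment is more complete on exactly this point, since the existence of such a $D'$ is not immediate and does require the $P$-ideal argument (or equivalently Lemma~\ref{suf2}).
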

\begin{proof}
We assume that $(q_n)$ has the $d$-splitting property. Now two cases can arise:
\begin{itemize}
\item[*]  At first, we consider $B=\emptyset$. Then there exists a $D'\subseteq\N$ with $d(D')=1$ such that $D'$ is $q$-divergent. Take any $m\in \N$. Since $D'$ is $q$-divergent, there exists an $n_m\in\N$ such that $q_n>m$ for all $n>n_m$ and $n\in D'$. We set $M=1$. Then it is evident that for all $m>M$
    \begin{eqnarray*}\label{dsplitseq}
     \overline{d}(\{n\in\N:  q_n\in[M,m]\}) & \leq&  \overline{d}(\{n\in D':  q_n\in[M,m]\} + \overline{d}(\N\setminus D')\\
     & \leq&  \overline{d}(\{n\in D':  n\leq n_m\})=0.
    \end{eqnarray*}
\item[*]  Let $B\neq\emptyset$. Then we have $\overline{d}(B)>0$ and consequently there exists a $B'\subseteq \N$ with $d(B\triangle B')=0$ such that $B'$ is $q$-bounded. In this case, we set $M=1+ \max\limits_{n\in B'}\{q_n\}$. Therefore, for any $m>M$, we obtain
    \begin{eqnarray*}
    &&\overline{d}(\{n\in\N:  q_n\in[M,m]\})\\
    & \leq& \overline{d}(\{n\in B':  q_n\in[M,m]\}) + \overline{d}(B\setminus B')\\ &+& \overline{d}(\{n\in D':  q_n\in[M,m]\})+\overline{d}(D\setminus D')\\
    &=& \overline{d}(\{n\in D':  q_n\in[M,m]\})=0 \mbox{ (From equation (\ref{dsplitseq})).}
    \end{eqnarray*}
\end{itemize}
 Conversely, let there exists a natural number $M$ such that $d(\{n\in\N:q_n\in[M,m]\})=0$ for all $m>M$. We set $B'=\{n\in\N:q_n\in[1,M-1]\}$ and $D'=\N\setminus B'$.
\begin{itemize}
\item[*]  If $\overline{d}(B')>0$ and $d(D')=0$, then we take $B=\N$ and $D=\emptyset$.
\item[*]  If $d(B')=0$ and $\overline{d}(D')>0$, then we take $D=\N$ and $B=\emptyset$.
\item[*]  If $\overline{d}(B')>0$ and $\overline{d}(D')=>0$, then we take $B=B'$ and $D=D'$.
\end{itemize}
Clearly, $B$ and $D$ witness the $d$-splitting property for the sequence $(q_n)$.
\end{proof}

From Proposition \ref{oldsplit} and Proposition \ref{newsplit}, it is obvious that every splitting sequence is a $d$-splitting sequence. However the converse is not necessarily true, nor it is true that every subset of $\N$ has the $d$-splitting property (an example not having splitting property was given in Example 3.12 \cite{DI1} but one must take into consideration that a non-splitting sequence can still be $d$-splitting).
\begin{example}
Let $A_1=\{n\in\N: n=k^2$ for some $k\in\N\}$, $A_2=\{n\in\N:n=k^2+1$ for some $k\in\N\}\setminus A_1$, $\ldots$ , $A_{i+1}=\{n\in\N:n=k^2+i$ for some $k\in\N\}\setminus \bigcup\limits_{j=1}^{i} A_j$. Take any $n\in\N$. One can find a $k\in\N$ such that $k^2\leq n< (k+1)^2$. So we can write $n=k^2+i$ for some $i\in\N\cup \{0\}$ i.e. $n\in A_i$. Therefore, $\N=\bigcup\limits_{i=1}^{\infty} A_i$ i.e. $(A_i)_{i\in\N}$ forms a partition of $\N$.

For each $m\in\N$, we now define $q_n=m$ for all $n\in A_m$. Clearly, for $m,M\in\N$ and $m>M$, we have $\{n\in\N:q_n\in[M,m]\}=\bigcup\limits_{i=M}^{m} A_i$. Since $d(A_i)=0$ for all $i\in\N$, we get $d(\{n:q_n\in[M,m]\})=0$ for all $m,M\in\N$ and $m>M$. Therefore, from Proposition \ref{newsplit}, $(q_n)$ is a $d$-splitting sequence. But, we can observe that $\{n:q_n\in[M,m]\}$ cannot be finite for any $m,M\in\N$ and $m>M$ (since $A_i$ is infinite for all $i\in\N$). Therefore, from Proposition \ref{oldsplit}, $(q_n)$ is not a splitting sequence.
\end{example}

\begin{example}
 Let us define $q_n=\{i\in\N:n=2^{i-1}(2k-1)$ for some $k\in\N\}$. Let $A_i=\{n\in\N:q_n=i\}$. From the construction it is evident that $d(A_i)=\frac{1}{2^i}$ i.e. $d(A_i)>0$ for all $i\in\N$ and $\N=\bigcup\limits_{i=1}^{\infty} A_i$. Now for any $m,M\in\N$ with $m>M$, observe that $d(\{n\in\N:q_n\in[M,m]\})=d(\bigcup\limits_{i=M}^{m} A_i)>0$. Therefore, from Proposition \ref{newsplit}, $(q_n)$ is not a $d$-splitting sequence.
\end{example}
Motivated by these two examples, we present below equivalent conditions for a sequence to be splitting or $d$-splitting (or in other words, equivalent formulations of Proposition \ref{oldsplit} and Proposition \ref{newsplit}).
\begin{proposition}
Let $(q_n)$ be a sequence of natural numbers. For all $i\in\N$, we define $A_i=\{n:q_n=i\}$. Then
\begin{itemize}
\item[(i)]  $(q_n)$ is a splitting sequence if and only if there does not exist a subsequence $(A_{n_k})_{k\in\N}$ of $(A_n)$ such that $A_{n_k}$ is infinite for all $k\in\N$:
\item[(ii)]  $(q_n)$ is a $d$-splitting sequence if and only if there does not exist a subsequence $(A_{n_k})_{k\in\N}$ of $(A_n)$ such that $\overline{d}(A_{n_k})>0$ for all $k\in\N$.
\end{itemize}
\end{proposition}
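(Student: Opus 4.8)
The plan is to reduce both statements to Propositions \ref{oldsplit} and \ref{newsplit} by means of the elementary identity, valid for natural numbers $M\le m$,
\[
\{n\in\N : q_n\in[M,m]\}=\bigcup_{i=M}^{m}A_i,
\]
which is a \emph{finite} union. From this I would extract two observations. First, a finite union of sets is finite if and only if each of its members is finite, so $\bigcup_{i=M}^{m}A_i$ is finite exactly when $A_i$ is finite for every $i\in[M,m]$. Second, since $\overline d$ is monotone and finitely subadditive and the assertion $d(E)=0$ simply means $\overline d(E)=0$, we have $d\bigl(\bigcup_{i=M}^{m}A_i\bigr)=0$ exactly when $\overline d(A_i)=0$ for every $i\in[M,m]$: monotonicity gives one implication (each $A_i\subseteq\bigcup_{i=M}^m A_i$) and subadditivity the other.

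Next I would translate the two criteria. By Proposition \ref{oldsplit}, $(q_n)$ is splitting iff there is an $M$ with $\bigcup_{i=M}^m A_i$ finite for all $m>M$; by the first observation this says $A_i$ is finite for every $i\ge M$, i.e.\ the set $S_\infty:=\{i\in\N : A_i\text{ is infinite}\}$ is bounded, hence finite. Likewise, by Proposition \ref{newsplit}, $(q_n)$ is $d$-splitting iff there is an $M$ with $d\bigl(\bigcup_{i=M}^m A_i\bigr)=0$ for all $m>M$; by the second observation this says $\overline d(A_i)=0$ for every $i\ge M$, i.e.\ $S_d:=\{i\in\N : \overline d(A_i)>0\}$ is finite. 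Finally, a subsequence $(A_{n_k})_{k\in\N}$ of $(A_n)$ with $A_{n_k}$ infinite for all $k$ exists precisely when $S_\infty$ is infinite, and a subsequence with $\overline d(A_{n_k})>0$ for all $k$ exists precisely when $S_d$ is infinite; so the non-existence of such a subsequence is equivalent to $S_\infty$ (resp.\ $S_d$) being finite. Combining with the previous sentences yields (i) and (ii).

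I do not expect a genuine obstacle here: the entire argument rests on the finite-union identity together with finite subadditivity and monotonicity of $\overline d$. The only place meriting a line of care is the equivalence ``$d=0$ on the finite union $\iff$ $\overline d=0$ on each piece'', where one invokes monotonicity in one direction and finite subadditivity in the other, keeping in mind that $d(E)=0$ is read as $\overline d(E)=0$ (which automatically forces $\underline d(E)=0$). Everything else is bookkeeping.
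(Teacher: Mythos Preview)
Your argument is correct. The paper does not supply a proof of this proposition at all; it merely introduces it as ``equivalent formulations of Proposition \ref{oldsplit} and Proposition \ref{newsplit}'', and your reduction via the finite-union identity $\{n:q_n\in[M,m]\}=\bigcup_{i=M}^{m}A_i$ together with monotonicity and finite subadditivity of $\overline d$ is exactly the intended (and essentially only) route to make that remark precise.
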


  For the next result we will use the following notations. Let $(a_n)$ be an arithmetic sequence and $x\in\T$ with canonical representation (\ref{canrep}). Assume that the sequence of ratios $(q_n)$ has the $d$-splitting property which means that there exists a partition $\N=B\cup D$ such that $(a), (b)$ and $(c)$ of Definition \ref{dsplitdef} hold. We will write $B^S(x)=B\cap supp(x)$, $B^N(x)=B\cap (\N\setminus supp(x))$ and $D^S(x)=D\cap supp(x)$.

 From Remark \ref{remark}, it follows that $D\cap (\N\setminus supp(x))$ does not play any role in Theorem \ref{mainth}. Note that if $B,D \neq \emptyset$, then there exists $B'\subseteq \N$ with $d(B\triangle B')=0$ and $D'\subseteq \N$ with $d(D\triangle D')=0$ such that $B'^S(x),B'^N(x)$ are $q$-bounded while $D'^S(x)$ is $q$-divergent. Our next result is a characterization of a topological $s$-torsion element, when the sequence of ratios $(q_n)$ has the $d$-splitting property.

\begin{theorem}\label{splitcoro}
Let $(a_n)$ be an arithmetic sequence and $x\in\T$ has canonical representation \ref{canrep}. If the sequence of ratios $(q_n)$ has the $d$-splitting property, then $x$ is a topological $s$-torsion element i.e. $x\in t^s_{(a_n)}(\T)$ if and only if the following conditions hold:
\begin{itemize}
\item[(i)]  $B^S(x)+1\subseteq^d supp(x), \ B^S(x)\subseteq^d supp_q(x)$, and if $\overline{d}(B^S(x))>0$ then $\lim\limits_{n\in B_1^S(x)} \frac{c_{n+1}+1}{q_{n+1}} =1$ in $\R$, where $B_1\subseteq B$ with $d(B\setminus B_1)=0$.
\item[(ii)]  If $\overline{d}(B^N(x))>0$, then $\lim\limits_{n\in B_1^N(x)} \frac{c_{n+1}}{q_{n+1}}=0$ in $\R$, where $B_1\subseteq B$ with $d(B\setminus B_1)=0$.
\item[(iii)]  If $\overline{d}(D^S(x))>0$, then $\lim\limits_{n\in D_1^S(x)} \frac{c_n}{q_n}=0$ in $\T$, where $D_1\subseteq D$ with $d(D\setminus D_1)=0$.
\end{itemize}
\end{theorem}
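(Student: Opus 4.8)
The plan is to derive Theorem~\ref{splitcoro} from Theorem~\ref{mainth}: under the $d$-splitting hypothesis the conditions of Theorem~\ref{mainth}, quantified over \emph{every} $A\subseteq\N$ with $\overline{d}(A)>0$, should collapse to the three conditions (i)--(iii), which only involve the canonical pieces $B^S(x)$, $B^N(x)$ and $D^S(x)$. I would fix witnesses $B',D'\subseteq\N$ with $d(B\triangle B')=d(D\triangle D')=0$, $B'$ $q$-bounded and $D'$ $q$-divergent (available whenever $B,D\neq\emptyset$), and record at the outset two absorption facts. \textbf{(A)} If $A$ is $q$-bounded with $\overline{d}(A)>0$, then $A\cap D'$ is finite, hence $d(A\cap D)=0$ and $A=^dA\cap B$; consequently $A\subseteq^dB^S(x)$ whenever $A\subseteq^d\supp(x)$, and $A\subseteq^dB^N(x)$ whenever $d(A\cap\supp(x))=0$. \textbf{(B)} If $A$ is $q$-divergent with $\overline{d}(A)>0$ and $A\subseteq\supp(x)$, then $A\cap B'$ is finite, hence $d(A\cap B)=0$ and $A\subseteq^dD^S(x)$. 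I would use freely that natural density is translation invariant, that $\subseteq^d$ and $=^d$ are transitive, and that each conclusion appearing in Theorem~\ref{mainth} and in Claim~\ref{remark1} is inherited by $\subseteq^d$-smaller sets (for the limit statements one intersects the witnessing set with the smaller one, discarding the density-zero difference).

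For \emph{necessity}, assume $x\in t^s_{(a_n)}(\T)$, so that the conditions of Theorem~\ref{mainth} hold for every $A$ with $\overline{d}(A)>0$. If $\overline{d}(B^S(x))=0$ the two inclusion statements in (i) are automatic and nothing else is claimed; if $\overline{d}(B^S(x))>0$, I would apply item~(a1) of Theorem~\ref{mainth} to $A=B'\cap\supp(x)$, which is $q$-bounded, satisfies $A\subseteq^d\supp(x)$ and has $\overline{d}(A)=\overline{d}(B^S(x))>0$: this yields $B^S(x)+1\subseteq^d\supp(x)$, $B^S(x)\subseteq^d\supp_q(x)$, and, after passing to a suitable $B_1=^dB$ and shrinking the witnessing subset, $\lim\limits_{n\in B_1^S(x)}\frac{c_{n+1}+1}{q_{n+1}}=1$, which is (i). Conditions (ii) and (iii) are obtained the same way, applying item~(a2) to $A=B'\cap(\N\setminus\supp(x))$ and item~(b) to $A=D'\cap\supp(x)$.

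For \emph{sufficiency}, assume (i)--(iii) and verify the conditions of Theorem~\ref{mainth} for an arbitrary $A$ with $\overline{d}(A)>0$; this gives $x\in t^s_{(a_n)}(\T)$. If $A$ is $q$-bounded and $A\subseteq^d\supp(x)$, then by (A) $A\subseteq^dB^S(x)$, hence $\overline{d}(B^S(x))>0$, so (i) delivers $A+1\subseteq^d\supp(x)$, $A\subseteq^d\supp_q(x)$ and the required limit along a $d$-full subset of $A$; if moreover $A+1$ is $q$-bounded, then (A) applied to $A+1$ gives $A+1\subseteq^dB^S(x)\subseteq^d\supp_q(x)$, the extra clause of (a1). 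The case ``$A$ $q$-bounded with $d(A\cap\supp(x))=0$'' is symmetric, using $B^N(x)$ and (ii), the extra clause of (a2) following because on a $q$-bounded set the vanishing of $\frac{c_{n+1}}{q_{n+1}}$ forces $c_{n+1}=0$ eventually; for an $A$ that is neither $\subseteq^d\supp(x)$ nor has $d(A\cap\supp(x))=0$, both (a1) and (a2) are vacuous. Finally, if $A$ is $q$-divergent, then by Remark~\ref{remark} we may assume $A\subseteq\supp(x)$, whence (B) gives $A\subseteq^dD^S(x)$ and (iii) gives $\lim\limits_{n\in B_*}\frac{c_n}{q_n}=0$ in $\T$ for some $B_*=^dA$, which is exactly (b).

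The argument carries no conceptual core beyond this reduction; I expect the only genuine difficulty to be organizational — correctly propagating the ``up to a set of density zero'' approximations through the several index shifts hidden in Claim~\ref{remark1}, and making each auxiliary (``moreover'') conclusion of Theorem~\ref{mainth} line up — and the one load-bearing observation is that $d$-splitting forces every $q$-bounded (respectively $q$-divergent) positive-density set to lie inside $B$ (respectively $D$) up to a set of density zero, so that no set other than $B^S(x)$, $B^N(x)$, $D^S(x)$ is ever needed.
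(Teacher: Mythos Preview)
Your proposal is correct and follows essentially the same approach as the paper: both directions are reduced to Theorem~\ref{mainth} via the key absorption observation that under $d$-splitting every $q$-bounded (resp.\ $q$-divergent) set of positive upper density lies in $B$ (resp.\ $D$) up to density zero, and then (a1), (a2), (b) are matched with the canonical pieces $B^S(x)$, $B^N(x)$, $D^S(x)$. If anything, your write-up is slightly more thorough than the paper's, since you explicitly dispose of the ``moreover'' clauses of (a1)--(a2) and the vacuous case where $A$ satisfies neither hypothesis, points the paper leaves implicit.
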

\begin{proof}
{\bf Necessity:}  Let $x\in t^s_{(a_n)}(\T)$. Observe that (a) and (b) of Theorem \ref{mainth} hold. \\
\begin{itemize}
\item[(i)]  If $d(B^S(x)=0$, then there is nothing to prove. So, we consider the case when $\overline{d}(B^Sx)>0$. Now, there exists a $B'\subseteq \N$ with $d(B\triangle B')=0$ such that $B'$ is $q$-bounded. Since $d(B\triangle B')=0$, we get $B'^S(x)\subseteq^d supp(x)$. Therefore, taking $A=B'^S(x)$ in Theorem \ref{mainth} and applying (a1), we get $B'^S(x)+1\subseteq^d supp(x), \ B'^S(x)\subseteq^d supp_q(x)$ and $\lim\limits_{n\in B'^S_1(x)} \frac{c_{n+1}+1}{q_{n+1}} =1$ in $\R$, where $B'_1\subseteq B'$ and $d(B'\setminus B'_1)=0$. \\

    Again, since $d(B\triangle B')=0$, we finally get $B^S(x)+1\subseteq^d supp(x), \ B^S(x)\subseteq^d supp_q(x)$ and $\lim\limits_{n\in B_1^S(x)} \frac{c_{n+1}+1}{q_{n+1}} =1$ in $\R$, where $B_1=(B\cap B'_1)\subseteq B$ with $d(B\setminus B_1)=0$. \\
\item[(ii)]  Let $\overline{d}(B^N(x))>0$. Since $d(B^N(x)\cap supp(x))=0$, applying (a2) of Theorem \ref{mainth} to $A=B'^N(x)$, we get $\lim\limits_{n\in B_1^N(x)} \frac{c_{n+1}}{q_{n+1}}=0$ in $\R$, where $B_1\subseteq B$ and $d(B\setminus B_1)=0$.\\
\item[(iii)]  Let $\overline{d}(D^S(x))>0$. Since there exists $D'\subseteq \N$ with $d(D\triangle D')=0$ such that $D'$ is $q$-divergent, applying (b) of Theorem \ref{mainth} to $A=D'^S(x)$ (As, $d(D\triangle D')=0 \ \Rightarrow\ \overline{d}(D'^S(x))=\overline{d}(D^S(x))>0$), we get $\lim\limits_{n\in D_1^S(x)} \frac{c_n}{q_n}=0$ in $\T$, where $D_1\subseteq D$ and $d(D\setminus D_1)=0$.
\end{itemize}
{\bf Sufficiency:}  Let the conditions hold. It suffices to show that the conditions of Theorem \ref{mainth} hold. If $d(supp(x))=0$ then there is nothing to prove. So let us assume that $\overline{d}(supp(x))>0$. Consider any $A\subseteq N$ with $\overline{d}(A)>0$. \\
\begin{itemize}
\item[(a)]  First suppose that $A$ is $q$-bounded. \\
\begin{itemize}
\item[(a1)]  Let $A\subseteq^d supp(x)$. Since $A$ is $q$-bounded, $A\subseteq^d B$. Therefore, $A\subseteq^d B^S(x)$ and we get $\overline{d}(B^S(x)>0$. By (i), we have $B^S(x)+1\subseteq^d supp(x), \ B^S(x)\subseteq^d supp_q(x)$ and $\lim\limits_{n\in B_1^S(x)} \frac{c_{n+1}+1}{q_{n+1}} =1$ in $\R$, where $B_1\subseteq B$ and $d(B\setminus B_1)=0$. Again, since $A\subseteq^d B^S(x)$, we get $A+1\subseteq^d supp(x), \ A\subseteq^d supp_q(x)$ and $\lim\limits_{n\in A'} \frac{c_{n+1}+1}{q_{n+1}}=1$ in $\R$, where $A'=A\cap B_1 \subseteq A$ and $d(A\setminus A')=0$.
\item[(a2)]  Now, let $d(A\cap supp(x))=0$. Since $A$ is $q$-bounded, $A\subseteq^d B$. Therefore, $A\subseteq^d B^N(x)$ and we get $\overline{d}(B^N(x))>0$. By (ii), we have $\lim\limits_{n\in B_1^N(x)} \frac{c_{n+1}}{q_{n+1}}=0$ in $\R$, where $B_1\subseteq B$ and $d(B\setminus B_1)=0$. Now, taking $A'=A\cap B_1$, we get
    $\lim\limits_{n\in A'} \frac{c_{n+1}}{q_{n+1}}=0$ in $\R$, where $A'\subseteq A$ and $d(A\setminus A')=0$ ( Since, $d(A\setminus A')=d(A\setminus B_1)=d(A\setminus B)=0$). \\
\end{itemize}
\item[(b)]  Let us now assume that $A$ is $q$-divergent. Then we have $A\subseteq^d D$. From Remark \ref{remark}, without any loss of generality we can assume that $A\subseteq supp(x)$. Therefore, $A\subseteq^d D^S(x)$ and we get $\overline{d}(D^S(x))>0$. By (iii), we have $\lim\limits_{n\in D_1^S(x)} \frac{c_n}{q_n}=0$ in $\T$, where $D_1\subseteq D$ and $d(D\setminus D_1)=0$. Now, taking $A'=A\cap D_1$, we get $\lim\limits_{n\in A'} \frac{c_n}{q_n}=0$ in $\T$, where $A'\subseteq A$ and $d(A\setminus A')=0$ ( Since, $d(A\setminus A')=d(A\setminus D_1)=d(A\setminus D)=0$). Therefore, from theorem \ref{mainth}, we can conclude that $x\in t^s_{(a_n)}(\T)$.
\end{itemize}
\end{proof}


In particular, one can obtain simpler characterizations of topological $s$-torsion elements when $supp(x)$ is either $q$-bounded or $q$-divergent for the given arithmetic sequence.
\begin{corollary}\label{supboucoro}
If $supp(x)$ is $q$-bounded, then $x\in t^s_{(a_n)}(\T)$ if and only if the following statements hold:
\begin{itemize}
\item[(i)]  $d((supp(x)+1)\setminus supp(x))=0$, and  \item[(ii)]   $ \ d(supp(x)\setminus supp_q(x))=0$.
\end{itemize}
\end{corollary}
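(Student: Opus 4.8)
The plan is to read the equivalence off Theorem \ref{mainth}. First I would dispose of the degenerate case $d(supp(x))=0$: here $x\in t^s_{(a_n)}(\T)$ already by Theorem \ref{mainth}, while (i) holds because $\overline{d}(supp(x)+1)=\overline{d}(supp(x))=0$ and (ii) holds because $supp_q(x)\subseteq supp(x)$; so both sides of the asserted equivalence are true. From now on assume $\overline{d}(supp(x))>0$.

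For necessity, suppose $x\in t^s_{(a_n)}(\T)$ and apply part (a1) of Theorem \ref{mainth} to the set $A=supp(x)$. This is legitimate: $supp(x)$ is $q$-bounded by hypothesis, has positive upper density, and trivially satisfies $A\subseteq^d supp(x)$. The conclusion of (a1) is precisely $supp(x)+1\subseteq^d supp(x)$ and $supp(x)\subseteq^d supp_q(x)$, that is, (i) and (ii).

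For sufficiency, assume (i) and (ii); I would verify that $x$ satisfies conditions (a) and (b) of Theorem \ref{mainth} for every $A\subseteq\N$ with $\overline{d}(A)>0$. Condition (b) is vacuous: by Remark \ref{remark} one may restrict attention in (b) to sets $A\subseteq supp(x)$, and a $q$-bounded set has no infinite $q$-divergent subset, so no such $A$ occurs. For (a), let $A$ be $q$-bounded with $\overline{d}(A)>0$. If $A\subseteq^d supp(x)$, then $A+1\subseteq^d supp(x)+1\subseteq^d supp(x)$ (the first containment since $d((A+1)\setminus(supp(x)+1))=d(A\setminus supp(x))=0$, the second by (i)) and $A\subseteq^d supp(x)\subseteq^d supp_q(x)$ by (ii); putting $A'=\{n\in A: n+1\in supp_q(x)\}$ makes $\frac{c_{n+1}+1}{q_{n+1}}=1$ for all $n\in A'$ while $d(A\setminus A')=d((A+1)\setminus supp_q(x))=0$, which is the conclusion of (a1), its final clause included. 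If instead $d(A\cap supp(x))=0$, put $A'=\{n\in A: n+1\notin supp(x)\}$, so that $\frac{c_{n+1}}{q_{n+1}}=0$ for all $n\in A'$; here $A\setminus A'\subseteq (A\cap supp(x))\cup\big((supp(x)-1)\setminus supp(x)\big)$ and $(A+1)\cap supp(x)=(A\setminus A')+1$, so both the limit assertion and the final clause of (a2) follow as soon as $d((supp(x)-1)\setminus supp(x))=0$ is known.

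Hence the whole argument reduces to one small lemma --- the only step where an idea rather than bookkeeping is needed --- namely that for every $S\subseteq\N$ one has $d((S+1)\setminus S)=0$ if and only if $d(S\setminus(S+1))=0$. This I would prove by comparing partial sums: $|(S+1)\cap[0,n]|=|S\cap[0,n-1]|$ differs from $|S\cap[0,n]|$ by at most $1$, so the counting functions of $(S+1)\setminus S$ and of $S\setminus(S+1)$ on $[0,n]$ differ by at most $1$ for every $n$, and the two sets therefore have the same asymptotic density. Applying this with $S=supp(x)$, condition (i) gives $d(supp(x)\setminus(supp(x)+1))=0$, hence $d((supp(x)-1)\setminus supp(x))=0$ by translation invariance of density; all the density estimates above then close, and Theorem \ref{mainth} yields $x\in t^s_{(a_n)}(\T)$.
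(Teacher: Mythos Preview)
Your proof is correct. For necessity you and the paper do the same thing: apply clause (a1) of Theorem~\ref{mainth} with $A=supp(x)$.

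For sufficiency the two diverge. You verify (a) and (b) of Theorem~\ref{mainth} for every admissible $A$: condition (b) is dispatched via Remark~\ref{remark} (a $q$-bounded support admits no $q$-divergent subset of positive upper density), (a1) follows straight from (i) and (ii), and (a2) is reduced to the combinatorial lemma $d((S+1)\setminus S)=0\Leftrightarrow d(S\setminus(S+1))=0$. Your proof of that lemma, via the counting identity $|U\cap[0,n]|-|V\cap[0,n]|=|(U\setminus V)\cap[0,n]|-|(V\setminus U)\cap[0,n]|$ applied to $U=S+1$, $V=S$, is sound, and translation by $-1$ then yields $d((supp(x)-1)\setminus supp(x))=0$ as needed. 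The paper's printed ``sufficiency'' paragraph, by contrast, assumes $x\in t^s_{(a_n)}(\T)$ together with the failure of (i) (respectively (ii)) and derives a contradiction --- which is a second proof of necessity, not of sufficiency. So your route actually supplies the implication that the paper's argument, as written, does not establish; the small density lemma you isolate is exactly the extra ingredient that makes the sufficiency direction go through.
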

\begin{proof}
Let $x\in t^s_{(a_n)}(\T)$. If $d(supp(x))=0$ then there is nothing to prove. So, assume that $\overline{d}(supp(x))>0$. Since $supp(x)$ is $q$-bounded and $\overline{d}(supp(x))>0$, we set $A=supp(x)$. Therefore from item (a1) of Theorem \ref{mainth}, we get $supp(x)+1\subseteq^d supp(x)$ and $supp(x)\subseteq^d supp_q(x)$. Thus, we have (i) $d((supp(x)+1)\setminus supp(x))=0$, and  (ii)   $ \ d(supp(x)\setminus supp_q(x))=0$.

In order to prove the sufficiency of the conditions, if possible, suppose that there is a $x\in t^s_{(a_n)}(\T)$ for which (i) does not hold i.e $\overline{d}((supp(x)+1)\setminus supp(x))>0$. Since, $\overline{d}(supp(x)+1)=\overline{d}(supp(x))$, we must have $\overline{d}(supp(x))>0$. Now, taking $A=supp(x)$ and applying item (a1) of Theorem \ref{mainth}, we get $A+1\subseteq^d supp(x)$ i.e. $d(supp(x)+1\setminus supp(x))=0$ \ ~ \ $-$ which is a contradiction. Therefore (i) holds true.

Now, let us consider that (ii) does not hold i.e. $\overline{d}(supp(x)\setminus supp_q(x))>0$ but $x\in t^s_{(a_n)}(\T)$. Set $A=supp(x)\setminus supp_q(x)$ and $q=1+\max\limits_{n\in supp(x)} \{q_n\}$. Consequently, from equation (\ref{eq6}), we obtain
$$
\frac{1}{q}< \lim\limits_{n\in A} \frac{c_n}{q_n}\leq \lim\limits_{n\in A} \{a_{n-1}x\} <\lim\limits_{n\in A} \frac{c_n+1}{q_n}\leq \lim\limits_{n\in A} \frac{q_n-1}{q_n}=1- \lim\limits_{n\in A}\frac{1}{q_n}<1-\frac{1}{q}
$$
$$
\Rightarrow \ \lim\limits_{n\in A} \{a_{n-1}x\} \neq 0 \mbox{ in $\T$ for some $\overline{d}(A)>0$}
$$
$-$ Which is a contradiction. Therefore (ii) holds true.
\end{proof}

\begin{corollary}\label{supdivcoro}
If $supp(x)$ is $q$-divergent, then $x\in t^s_{(a_n)}(\T)$ if and only if the following statements hold:
\begin{itemize}
\item[(i)]  $\lim\limits_{n\in D'} \frac{c_n}{q_n}=0$ in $\T$ for some $D'\subseteq supp(x)$ with $d(supp(x)\setminus D')=0$; and
\item[(ii)]  For every $D\subseteq^d supp(x)$ such that $D-1$ is $q$-bounded, $\lim\limits_{n\in D'} \frac{c_n}{q_n}=0$ in $\R$, where $D'\subseteq D$ and $d(D\setminus D')=0$.
\end{itemize}
\end{corollary}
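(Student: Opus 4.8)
The plan is to read both implications off Theorem~\ref{mainth} directly, using a single structural observation: since $supp(x)$ is $q$-divergent, every infinite subset of $supp(x)$ is again $q$-divergent, so any $q$-bounded subset of $\N$ meets $supp(x)$ in a finite (hence density-zero) set. With this in hand most of Theorem~\ref{mainth} collapses in our situation: item (a1) becomes vacuous, because a $q$-bounded $A$ with $\overline{d}(A)>0$ cannot satisfy $A\subseteq^d supp(x)$ (otherwise $A\cap supp(x)$ would be a $q$-bounded set of positive upper density sitting inside the $q$-divergent set $supp(x)$), and the ``moreover'' clauses of (a1) and (a2) hold automatically for the same finiteness reason. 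Thus only item (b) and the $(a2)$-implication carry content.

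For necessity I would assume $x\in t^s_{(a_n)}(\T)$; if $d(supp(x))=0$ both (i) and (ii) are trivial (take $D'=\emptyset$), so suppose $\overline{d}(supp(x))>0$ and invoke conditions (a), (b) of Theorem~\ref{mainth}. Applying item (b) with $A=supp(x)$, which is $q$-divergent of positive upper density, yields condition (i) verbatim. For (ii), let $D\subseteq^d supp(x)$ with $D-1$ $q$-bounded; we may assume $\overline{d}(D)>0$, hence $\overline{d}(D-1)>0$. Setting $A=D-1$, the observation above forces $d(A\cap supp(x))=0$, so item (a2) applies to $A$ and produces $A'\subseteq D-1$ with $d((D-1)\setminus A')=0$ and $c_{n+1}/q_{n+1}\to 0$ in $\R$ along $A'$; translating by $+1$, i.e.\ taking $D'=A'+1\subseteq D$, gives $d(D\setminus D')=0$ and $c_n/q_n\to 0$ in $\R$ along $D'$, which is (ii).

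For sufficiency I would assume (i), (ii) and verify the hypotheses of Theorem~\ref{mainth} for an arbitrary $A$ with $\overline{d}(A)>0$ (the case $d(supp(x))=0$ being trivial). If $A$ is $q$-divergent, Remark~\ref{remark} lets me assume $A\subseteq supp(x)$; intersecting $A$ with the full-density-in-$supp(x)$ set $D'$ from (i) produces a $B:=A\cap D'$ with $d(A\setminus B)=0$ and $c_n/q_n\to 0$ in $\T$ along $B$, which is item (b). If $A$ is $q$-bounded, item (a1) is vacuous as noted, so I only need the $(a2)$-implication in the case $d(A\cap supp(x))=0$: here the right object to feed into hypothesis (ii) is $D:=(A+1)\cap supp(x)\subseteq supp(x)$, whose back-shift $D-1\subseteq A$ is $q$-bounded, so (ii) supplies $D'\subseteq D$ with $d(D\setminus D')=0$ and $c_n/q_n\to 0$ in $\R$ along $D'$. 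Reassembling $A':=\bigl(A\setminus(supp(x)-1)\bigr)\cup(D'-1)$ gives a subset of $A$ with $d(A\setminus A')=0$ (since $A\setminus A'=(D\setminus D')-1$) along which $c_{n+1}/q_{n+1}\to 0$ in $\R$ — on the first piece $c_{n+1}=0$, on the second it is controlled by $D'$ — which is exactly (a2), and the ``moreover'' clause is automatic. Hence Theorem~\ref{mainth} gives $x\in t^s_{(a_n)}(\T)$.

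I expect the only step that is not pure bookkeeping to be the $(a2)$-case of sufficiency: one has to realize that hypothesis (ii) must be applied to $D=(A+1)\cap supp(x)$ rather than to $A$ itself, and then correctly reconstruct the witness set $A'$ from the density-zero correction on $D$ together with the indices where $c_{n+1}$ vanishes trivially. Everything else reduces to translation-invariance of natural density and routine manipulation of density-zero sets.
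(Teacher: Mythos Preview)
Your proposal is correct and follows essentially the same route as the paper: both directions are read off Theorem~\ref{mainth}, using the observation that a $q$-bounded set can meet the $q$-divergent $supp(x)$ only finitely, so that (a1) is vacuous and only (a2) and (b) carry content; necessity applies (b) to $A=supp(x)$ and (a2) to $A=D-1$, while sufficiency feeds $D=(A+1)\cap supp(x)$ into hypothesis~(ii). Your construction of the witness $A'=\bigl(A\setminus(supp(x)-1)\bigr)\cup(D'-1)$ in the (a2) sufficiency step is in fact more careful than the paper's, which simply sets $A'=D'-1$ and asserts $d(A\setminus A')=0$ without accounting for the piece $(A+1)\setminus supp(x)$.
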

\begin{proof}
First, let $x\in t^s_{(a_n)}(\T)$. If $d(supp(x))=0$, then there is nothing to prove. So, let us assume that $\overline{d}(supp(x))>0$. Now, taking $A=supp(x)$ and applying item (b) of Theorem \ref{mainth}, we can conclude that (i) holds true. Next let us suppose that $A=D-1$ is $q$-bounded for some $D\subseteq supp(x)$. If $d(A)=d(D)=0$, then there is nothing to prove. Therefore, we can assume $\overline{d}(A)>0$. Since, $supp(x)$ is $q$-divergent, we have $d(A\cap supp(x))=0$. Now, applying item (a2) of Theorem \ref{mainth}, we have $\lim\limits_{n\in A'}\frac{c_{n+1}}{q_{n+1}}=0$ in $\R$ for some $A'\subseteq A$ with $d(A\setminus A')=0$. Putting $D'=A'+1$, we get $\lim\limits_{n\in D'} \frac{c_n}{q_n}=0$ in $\R$, where $D'\subseteq D$ and $d(D\setminus D')=0$.

Conversely, let us assume that the conditions hold. To prove that $x\in t^s_{(a_n)}(\T)$, we need to show (a) and (b) of Theorem \ref{mainth} hold. Since (b) follows from (i), it is sufficient to show only (a). If $d(supp(x))=0$, then $x\in t^s_{(a_n)}(\T)$. So, assume that $\overline{d}(supp(x))>0$. Now, take any $A\subseteq N$ with $\overline{d}(A)>0$. If $A$ is $q$-bounded, then $d(supp(x)\cap A)=0$. Therefore, we need to prove only (a2). If $d((A+1)\cap supp(x))=0$, then taking $A'+1=(A+1)\setminus supp(x)$, we get $\lim\limits_{n\in A'} \frac{c_{n+1}}{q_{n+1}}=0$ in $\R$, where $A'\subseteq A$ with $d(A\setminus A')=0$. Now considering the situation when $\overline{d}((A+1)\cap supp(x))>0$, taking $D=(A+1)\cap supp(x)$ and applying (ii), we get $\lim\limits_{n\in D'}\frac{c_n}{q_n}=0$ in $\R$ for some $D'\subseteq D$ with $d(D\setminus D')=0$. Thus, putting $A'=D'-1$ in a similar manner, we obtain that $\lim\limits_{n\in A'} \frac{c_{n+1}}{q_{n+1}}=0$ in $\R$ for some $A'\subseteq A$ with $d(A\setminus A')=0$. Therefore, (a2) holds and we finally have $x\in t^s_{(a_n)}(\T)$.
\end{proof}

Following observations follow from our main results, giving certain particular cases of an element of $\T$ being or not being a topological $s$-torsion element.\\

$\bullet$ If $supp(x)$ is $q$-divergent and $\lim\limits_{n\in A} \frac{c_n}{q_n} =0$ in $\R$ for some $A\subseteq supp(x)$ with $d(supp(x)\setminus A)=0$, then $x$ is a topological $s$-torsion element of $\T$.

$\bullet$ Suppose $x\in\T$ has canonical representation \ref{canrep} with $q$-divergent support. If $d(supp(x)\setminus \{n\in supp(x): (c_n)$ is bounded$\})=0$, then $x$ is a topological $s$-torsion element of $\T$.

$\bullet$
Suppose $A$ is $q$-divergent and $d(A)=1$. Then $x$ is a topological $s$-torsion element of $\T$ if and only if $\lim\limits_{n\in D'} \frac{c_n}{q_n}=0$ in $\T$ for some $D'\subseteq supp(x)$ with $d(supp(x)\setminus D')=0$.

$\bullet$
Let $(a_n)$ be an arithmetic sequence and $x\in\T$ be such that
\begin{itemize}
\item[(i)] $supp(x)=\bigcup\limits_{n=1}^{\infty}[p_n,r_n]$, $p_n,r_n\in\N$, $p_n\leq r_n < p_{n+1}$ for all $n\in \N$;
\item[(ii)]  there exist $l\in\N$ such that for all $n\in\N$, $|r_n-p_n|\leq l$ and $|p_{n+1}-r_n|\leq l$;
\item[(iii)]  $supp(x)$ is $q$-bounded.
\end{itemize}
 Then $x$ is not a topological $s$-torsion element of $\T$.

$\bullet$
Let $(a_n)$ be an arithmetic sequence and $x\in\T$ be such that
\begin{itemize}
\item[(i)]  $\overline{d}(supp(x))>0$ and $supp(x)$ is $q$-divergent;
\item[(ii)]  for all $n\in supp(x)$, $\frac{c_n}{q_n}\in [r_1,r_2]$, where $0<r_1,r_2<1$.
\end{itemize}
 Then $x$ is not a topological $s$-torsion element of $\T$.\\

\noindent\textbf{Acknowledgement:}  The second author is also thankful to the CSIR for granting Junior Research Fellowship during the tenure of which this work was done.\\

\end{document}